\newcommand{\N}{\mathbb{N}}
\newcommand{\cH}{\mathcal{H}}
\newcommand{\cR}{\mathcal{R}}
\newcommand{\cA}{\mathcal{A}}
\newcommand{\bp}{\mathbf{p}}
\newcommand{\br}{\mathbf{r}}
\newcommand{\bw}{\mathbf{w}}
\newcommand{\bh}{\mathbf{h}}
\newcommand{\bg}{\mathbf{g}}
\newcommand{\bx}{\mathbf{x}}
\def\blfootnote{\gdef\@thefnmark{}\@footnotetext}
\newtheorem{theorem}{Theorem}
\newtheorem{lemma}{Lemma}
\newtheorem{proposition}{Proposition}
\newtheorem{observation}{Observation}
\newtheorem{corollary}{Corollary}
\author[Václav Blažej, Pavel Dvořák, Michal Opler]{V\'{a}clav Bla\v{z}ej\affiliationmark{1}\thanks{Acknowledges the support of the OP VVV MEYS funded project \mbox{CZ.02.1.01/0.0/0.0/16\_019/0000765} ``Research Center for Informatics''. This work was supported by the Grant Agency of the Czech Technical University in Prague, grant \mbox{No.~SGS20/208/OHK3/3T/18}.}
  \and Pavel Dvořák\affiliationmark{2}\thanks{Supported by Czech Science Foundation GA{\v C}R grant \#19-27871X.}
  \and Michal Opler\affiliationmark{2}\thanks{The work was supported by the grant SVV–2020–260578.}}
\title{Bears with Hats and Independence Polynomials}
\affiliation{
  Faculty of Information Technology, Czech Technical University in Prague, Prague, Czech Republic\\
  Faculty of Mathematics and Physics, Charles University, Prague, Czech Republic}
\keywords{hat guessing game, independence polynomial, chordal graphs}
\begin{document}

\publicationdata{vol. 25:2 }{2023}{7}{10.46298/dmtcs.10802}{2023-01-12; 2023-01-12; 2023-05-30; 2023-07-04}{2023-07-04}
\maketitle
\begin{abstract}
    Consider the following hat guessing game.
    A bear sits on each vertex of a graph $G$, and a demon puts on each bear a hat colored by one of $h$ colors.
    Each bear sees only the hat colors of his neighbors.
    Based on this information only, each bear has to guess $g$ colors and he guesses correctly if his hat color is included in his guesses.
    The bears win if at least one bear guesses correctly for any hat arrangement.

    We introduce a new parameter---fractional hat chromatic number $\hat{\mu}$, arising from the hat guessing game.
    The parameter $\hat{\mu}$ is related to the hat chromatic number which has been studied before.
    We present a surprising connection between the hat guessing game and the independence polynomial of graphs.
    This connection allows us to compute the fractional hat chromatic number of chordal graphs in polynomial time, to bound fractional hat chromatic number by a function of maximum degree of $G$, and to compute the exact value of $\hat{\mu}$ of cliques, paths, and cycles.
\end{abstract}

\blfootnote{An extended abstract of this work has been published in the proceedings of WG 2021  as \cite{WGversion}.}

\section{Introduction}
In this paper, we study a variant of a hat guessing game.
In these types of games, there are some entities---players, pirates, sages, or, as in our case, bears.
A bear sits on each vertex of graph $G$.
There is some adversary (a demon in our case) that puts a colored hat on the head of each bear.
A bear on a vertex $v$ sees only the hats of bears on the neighboring vertices of $v$ but he does not know the color of his own hat.
Now to defeat the demon, the bears should guess correctly the color of their hats.
However, the bears can only discuss their strategy before they are given the hats.
After they get them, no communication is allowed, each bear can only guess his hat color.
The variants of the game differ in the bears' winning condition.

The first variant was introduced by Ebert~\cite{Ebert1998Thesis}.
In this version, each bear gets a red or blue hat (chosen uniformly and independently) and they can either guess a color or pass.
The bears see each other, i.e. they stay on vertices of a clique.
They win if at least one bear guesses his color correctly and no bear guesses a wrong color.
The question is what is the highest probability that the bears win achievable by some strategy.
Soon, the game became quite popular and it was even mentioned in NY Times~\cite{Robinsion2001NYTimes}.

Winkler~\cite{Winkler2002} studied a variant where the bears cannot pass and the objective is to maximize the number of bears that correctly guess their hat color.
A generalization of this variant for more than two colors was studied by Feige~\cite{Feige2004} and Aggarwal~\cite{Aggarwal2005Auctions}.
Butler et al.~\cite{Butler08Intro} studied a variant where the bears are sitting on vertices of a general graph, not only a clique.
For a survey of various hat guessing games, we refer to theses of Farnik~\cite{Farnik2015Thesis} or Krzywkowski~\cite{Krzywkowski2012Thesis}.

In this paper, we study a variant of the game introduced by Farnik~\cite{Farnik2015Thesis}, where each bear has to guess and they win if at least one bear guesses correctly.
He introduced a hat guessing number HG of a graph $G$ (also named as hat chromatic number and denoted $\mu$ in later works) which is defined as the maximum $h$ such that bears win the game with $h$ hat colors.
We study a variant where each bear can guess multiple times and we consider that a bear guesses correctly if the color of his hat is included in his guesses.
We introduce a parameter \emph{fractional hat chromatic number} $\hat{\mu}$ of a graph $G$, which we define as the supremum of $\frac{h}{g}$ such that each bear has $g$ guesses and they win the game with $h$ hat colors.

Although the hat guessing game looks like a recreational puzzle, connections to more ``serious'' areas of mathematics and computer science were shown---like coding theory~\cite{Ebert2003Coding,Jin19Codes}, network coding ~\cite{Gadouleau11NetworkCoding,Riis07NetworkCoding}, auctions~\cite{Aggarwal2005Auctions}, finite dynamical systems~\cite{Gadouleau18FDS}, and circuits~\cite{Wu09Circuits}.
In this paper, we exhibit a connection between the hat guessing game and the independence polynomial of graphs, which is our main result.
This connection allows us to compute the optimal strategy of bears (and thus the value of $\hat{\mu}$) of an arbitrary chordal graph in polynomial time.
We also prove that the fractional hat chromatic number $\hat{\mu}$ is equal, up to a logarithmic factor, to the maximum degree of a graph, i.e., $\hat{\mu}(G) = \Omega(\Delta / \log \Delta)$ and $\hat{\mu}(G) = O(\Delta)$.
Finally, we compute the exact value of $\hat{\mu}$ of graphs from some classes, like paths, cycles, and cliques.

We would like to point out that the existence of the algorithm computing $\hat{\mu}$ of a chordal graph is far from obvious.
Butler et al.~\cite{Butler08Intro} asked how hard is to compute $\mu(G)$ and the optimal strategy for the bears.
Note that a trivial non-deterministic algorithm for computing the optimal strategy (or just the value of $\mu(G)$ or $\hat{\mu}(G)$) needs exponential time because a strategy of a bear on $v$ is a function of hat colors of bears on neighbors of $v$ (we formally define the strategy in Section~\ref{sec:Prelim}).
It is not clear if the existence of a strategy for bears would imply a strategy for bears where each bear computes his guesses by some efficiently computable function (like linear, computable by a polynomial circuit, etc.).
This would allow us to put the problem of computing $\mu$ into some level of the polynomial hierarchy, as noted by Butler et al.~\cite{Butler08Intro}.
On the other hand, we are not aware of any hardness results for the hat guessing games.
The maximum degree bound for $\hat{\mu}$ does not imply an exact efficient algorithm computing $\hat{\mu}(G)$ as well.
This phenomenon can be illustrated by the edge chromatic number $\chi'$ of graphs.
By Vizing's theorem~\cite[Chapter 5]{DiestelBook}, it holds for any graph $G$ that $\Delta(G) \leq \chi'(G) \leq \Delta(G) + 1$.
However, it is NP-hard to distinguish between these two cases~\cite{Holyer81EdgeColoring}.

\noindent {\bf Organization of the Paper.}
We finish this section with a summary of results about the variant of the hat guessing game we are studying.
In the next section, we present notions used in this paper and we define formally the hat guessing game.
In Section~\ref{sec:Multiguess}, we formally define the fractional hat chromatic number $\hat{\mu}$ and compare it to $\mu$.
In Section~\ref{sec:FirstSteps}, we generalize some previous results to the multi-guess setting.
We use these tools to prove our main result in Section~\ref{sec:Polynomials} including the poly-time algorithm that computes $\hat{\mu}$ for chordal graphs.
The maximum degree bound for $\hat{\mu}$ and computation of exact values of paths and cycles are provided in Section~\ref{sec:Applications}.

\subsection{Related and Follow-up Works}
As mentioned above, Farnik~\cite{Farnik2015Thesis} introduced a hat chromatic number $\mu(G)$ of a graph $G$ as the maximum number of colors $h$ such that the bears win the hat guessing game with $h$ colors and played on $G$.
He proved that $\mu(G) \leq O\bigl(\Delta(G)\bigr)$ where $\Delta(G)$ is the maximum degree of $G$.

Since then, the parameter $\mu(G)$ was extensively studied.
The parameter $\mu$ for multipartite graphs was studied by Gadouleau and Georgiu~\cite{Gadouleau15Hats} and by Alon et al.~\cite{Alon2020HatGuesNum}.
Szczechla~\cite{Szczechla2017CycleGraphs} proved that $\mu$ of cycles is equal to $3$ if and only if the length of the cycle is $4$ or it is divisible by $3$ (otherwise it is $2$).
Bosek et al.~\cite{Bosek2019Bears} gave bounds of $\mu$ for some graphs, like trees and cliques.
They also provided some connections between $\mu(G)$ and other parameters like chromatic number and degeneracy.
They conjectured that $\mu(G)$ is bounded by some function of the degeneracy $d(G)$ of the graph $G$.
They showed that such function has to be at least exponential as for every $d \geq 1$ they presented a graph $G$ of $d(G) = d$ such that $\mu(G) \geq 2^{d}$.
This result was improved by He and Li~\cite{He2020Degenerate} who showed that for every $d \geq 1$ there is a graph $G$ of $d(G) = d$ and $\mu(G) \geq 2^{2^{d(G) - 1}}$.
Since $\hat{\mu}(G)$ is lower-bounded by $\Omega\bigl(\Delta(G)/ \log \Delta(G)\bigr)$ (as we show in Section~\ref{sec:Applications}) it holds that $\hat{\mu}$ can not be bounded by any function of degeneracy as there are graph classes of unbounded maximum degree and bounded degeneracy (e.g. trees or planar graphs).
Recently, Kokhas et al.~\cite{Kokhas2021CliquesI,Kokhas2021CliquesII} studied a non-uniform version of the game, i.e., every bear may have a different number of possible hat colors.
They considered cliques and almost cliques.
They also provided a technique to build a strategy for a graph $G$ whenever $G$ is made up by combining $G_1$ and $G_2$ with known strategies.
We generalize some of their results and use them as ``basic blocks'' for our main result.

After the presentation of the preliminary version of this paper~\cite{WGversion}, Latyshev and Kokhas~\cite{Latyshev2021Followup} extended ideas presented in this paper to reason about the standard hat chromatic number.
In particular, they found a family of graphs of unbounded maximum degree such that for each graph $G$ in the family holds that $\mu(G) = \frac{4}{3}\Delta(G)$; thus they disproved a conjecture that $\mu(G) \leq \Delta(G) + 1$ stated in Bosek, et al.~\cite{Bosek2019Bears} and Farnik~\cite{Farnik2015Thesis} that was previously noticed by Alon et al.~\cite{Alon2020HatGuesNum}.

\section{Preliminaries}
\label{sec:Prelim}
We use standard notions of the graph theory.
For an introduction to this topic, we refer to the book by Diestel~\cite{DiestelBook}.
We denote a clique as $K_n$, a cycle as $C_n$, and a path as $P_n$, each on $n$ vertices.
The maximum degree of a graph $G$ is denoted by $\Delta(G)$, where we shorten it to $\Delta$ if the graph $G$ is clear from the context.
The neighbors of a vertex $v$ are denoted by $N(v)$.
We use $N[v]$ to denote the closed neighborhood of $v$, i.e. $N[v] = N(v) \cup \{v\}$.
For a set $U$ of vertices of a graph $G$, we denote by $G \setminus U$ a graph induced by vertices $V(G) \setminus U$, i.e., a graph arising from $G$ by removing the vertices in $U$.

A \emph{hat guessing game} is a triple $\cH = (G, h, g)$ where
\begin{itemize}
\item $G=(V,E)$ is an undirected graph, called the \emph{visibility graph},
\item $h \in \N$ is a \emph{hatness} that determines the number of different possible hat colors for each bear, and
\item $g \in \N$ is a \emph{guessing number} that determines the number of guesses each bear is allowed to make.
\end{itemize}

The rules of the game are defined as follows.
On each vertex of $G$ sits a bear.
The demon puts a hat on the head of each bear.
Each hat has one of $h$ colors.
We would like to point out, that it is allowed that bears on adjacent vertices get a hat of the same color.
The only information the bear on a vertex $v$ knows are the colors of hats put on bears sitting on neighbors of $v$.
Based on this information only, the bear has to guess a set of $g$ distinct colors according to a deterministic strategy agreed to in advance.
We say bear \emph{guesses correctly} if he included the color of his hat in his guesses.
The bears win if at least one bear guesses correctly.

Formally, we associate the colors with natural numbers and say that each bear can receive a hat colored by a color from the set $S = [h] = \{0, \ldots, h-1\}$.
A \emph{hats arrangement} is a function $\varphi\colon V \to S$.
A strategy of a bear on $v$ is a function $\Gamma_v\colon {S}^{|N(v)|} \to \binom{S}{g}$, and a \emph{strategy for $\cH$} is a collection of strategies for all vertices, i.e. $(\Gamma_v)_{v\in V}$.
We say that a strategy is \emph{winning} if for any possible hats arrangement $\varphi\colon V \to S$ there exists at least one vertex $v$ such that $\varphi(v)$ is contained in the image of $\Gamma_v$ on $\varphi$, i.e., $\varphi(v) \in \Gamma_v \bigl( (\varphi(u))_{u \in N(v)} \bigr)$.
Finally, the game $\cH$ is \emph{winning} if there exists a winning strategy of the bears.

As a classical example, we describe a winning strategy for the hat guessing game $(K_3, 3, 1)$.
Let us denote the vertices of $K_3$ by $v_0$, $v_1$ and $v_2$ and fix a hats arrangement $\varphi$.
For every $i \in [3]$, the bear on the vertex $v_i$ assumes that the sum $\sum_{j \in [3]} \varphi(v_j)$ is equal to $i$ modulo $3$ and computes its guess accordingly.
It follows that for any hat arrangement $\varphi$ there is always exactly one bear that guesses correctly, namely the bear on the vertex $v_i$ for $i = \sum_j \varphi(v_j) \pmod{3}$.

Some of our results are stated for a non-uniform variant of the hat guessing game.
A non-uniform game is a triple $\bigl(G = (V,E),  \bh, \bg\bigr)$ where $\bh = (h_v)_{v \in V}$ and $\bg = (g_v)_{v \in V}$ are vectors of natural numbers indexed by the vertices of $G$ and a bear on $v$ gets a hat of one of $h_v$ colors and is allowed to guess exactly $g_v$ colors.
Other rules are the same as in the standard hat guessing game.
To distinguish between the uniform and non-uniform games, we always use plain letters $h$ and $g$ for the hatness and the guessing number, respectively, and bold letters (e.g. $\bh,\bg$) for vectors indexed by the vertices of $G$.

For our proofs we use two classical results.
First one is the inclusion-exclusion principle for computing a size of a union of sets.

\begin{proposition}[folklore]
 \label{prp:IncExc}
 For a union $A$ of sets $A_1,\dots,A_n$, it holds that
 \[
  |A| = \sum_{\emptyset \neq I \subseteq \{1,\dots,n\}}  (-1)^{|I|+1} \left| \bigcap_{i \in I} A_i \right|.
 \]
\end{proposition}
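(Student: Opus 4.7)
The plan is to prove the identity by a double-counting argument: fix an arbitrary element $x \in A$ and show that the total contribution of $x$ to the right-hand side equals exactly $1$. Summing over all $x \in A$ then gives $|A|$ on both sides. This is cleaner than induction, though induction on $n$ via the two-set identity $|B \cup C| = |B| + |C| - |B \cap C|$ (applied to $B = A_1 \cup \cdots \cup A_{n-1}$ and $C = A_n$, together with the auxiliary expansion of $|B \cap A_n| = |(A_1 \cap A_n) \cup \cdots \cup (A_{n-1} \cap A_n)|$) would be a perfectly valid alternative.

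Concretely, for a fixed $x \in A$ let $J(x) = \{ i \in \{1,\dots,n\} : x \in A_i \}$. Since $x \in A$, the set $J(x)$ is non-empty. For a non-empty $I \subseteq \{1,\dots,n\}$, the element $x$ is counted in $\left|\bigcap_{i \in I} A_i\right|$ if and only if $I \subseteq J(x)$. Hence the contribution of $x$ to the right-hand side equals
\[
\sum_{\emptyset \neq I \subseteq J(x)} (-1)^{|I|+1} = -\sum_{k=1}^{|J(x)|} \binom{|J(x)|}{k} (-1)^{k}.
\]

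The key calculation is then to evaluate the inner sum using the binomial theorem: since $0 = (1-1)^{|J(x)|} = \sum_{k=0}^{|J(x)|} \binom{|J(x)|}{k} (-1)^{k}$, we obtain $\sum_{k=1}^{|J(x)|} \binom{|J(x)|}{k} (-1)^{k} = -1$, so the contribution of $x$ is exactly $1$. Summing over all $x \in A$ yields $|A|$, which matches the left-hand side.

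There is essentially no obstacle here; the only place where care is required is in making sure the alternating sum is correctly tracked (the exclusion of $I = \emptyset$ on the combinatorial side corresponds to the exclusion of $k = 0$ on the binomial side, which is precisely what makes the final answer $1$ rather than $0$). Since the statement is folklore, a brief self-contained presentation of the above is sufficient.
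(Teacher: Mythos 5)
Your proof is correct and complete: the paper states this proposition as folklore and gives no proof of its own, so there is nothing to compare against, but your double-counting argument (fixing $x \in A$, restricting to $I \subseteq J(x)$, and evaluating the alternating sum via $(1-1)^{|J(x)|}=0$, which is valid precisely because $J(x) \neq \emptyset$) is the standard, fully rigorous derivation. No gaps.
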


The other one is the rational root theorem, which we use to derive an algorithm for computing an exact value of $\hat{\mu}$, if the value is rational.

\begin{theorem}[Rational root theorem~\cite{Larson07Calculus}]
\label{thm:RationalRoot}
If a polynomial $a_n x^n + \dots a_1 x + a_0$ has integer coefficients, then every rational root is of the form $p/q$
where $p$ and $q$ are coprimes, $p$ is a divisor of $a_0$, and $q$ is a divisor of $a_n$.
\end{theorem}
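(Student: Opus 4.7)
The plan is to reduce the claim to two divisibility assertions by substituting the candidate root into the polynomial, clearing denominators, and then extracting the divisibilities by isolating the appropriate terms. Concretely, I would start by assuming that $p/q$ is a rational root already written in lowest terms, so that $\gcd(p,q) = 1$. Substituting $x = p/q$ into $a_n x^n + a_{n-1} x^{n-1} + \dots + a_1 x + a_0 = 0$ and multiplying both sides by $q^n$ yields the integer identity
\[
a_n p^n + a_{n-1} p^{n-1} q + \dots + a_1 p q^{n-1} + a_0 q^n = 0.
\]

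To deduce that $p \mid a_0$, I would isolate the constant-coefficient term, obtaining $a_0 q^n = -p\bigl(a_n p^{n-1} + a_{n-1} p^{n-2} q + \dots + a_1 q^{n-1}\bigr)$. The right-hand side is divisible by $p$, hence so is $a_0 q^n$, and since $\gcd(p,q) = 1$ extends to $\gcd(p, q^n) = 1$, we conclude $p \mid a_0$. Symmetrically, isolating the leading term yields $a_n p^n = -q\bigl(a_{n-1} p^{n-1} + \dots + a_0 q^{n-1}\bigr)$, and the analogous coprimality argument gives $q \mid a_n$.

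The only mildly subtle point, and the one I would flag as the main (though modest) obstacle, is the implication $\gcd(p,q) = 1 \Rightarrow \gcd(p, q^n) = 1$. I would justify it via Euclid's lemma: any prime $\pi$ dividing both $p$ and $q^n$ must divide $q$, contradicting coprimality of $p$ and $q$; alternatively, one may invoke unique factorization in $\mathbb{Z}$ directly. Once this auxiliary fact is in hand, the two divisibility conclusions drop out immediately and the theorem follows.
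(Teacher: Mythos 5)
Your proof is correct and is the standard argument for the rational root theorem; the paper itself does not prove this statement but merely cites it from a textbook, so there is nothing to compare against. The one step you flagged, $\gcd(p,q)=1 \Rightarrow \gcd(p,q^n)=1$, is indeed the only point requiring justification, and your appeal to Euclid's lemma handles it properly.
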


\section{Fractional Hat Chromatic Number}
\label{sec:Multiguess}
From the hat guessing games, we can derive parameters of the underlying visibility graph $G$.
Namely, the \emph{hat chromatic number $\mu(G)$} is the maximum integer $h$ for which the hat guessing game $(G, h, 1)$ is winning, i.e., each bear gets a hat colored by one of $h$ colors and each bear has only one guess---we call such game a single-guessing game.
In this paper, we study a parameter \emph{fractional hat chromatic number $\hat{\mu}(G)$} which arises from the hat multi-guessing game and is defined as
\[
    \hat{\mu}(G) = \sup \left\{\frac{h}{g} \;\middle|\; (G, h, g) \text{ is a winning game }\right\}.
\]

Observe that $\mu(G) \le \hat{\mu}(G)$.
Farnik~\cite{Farnik2015Thesis} and Bosek et al.~\cite{Bosek2019Bears} also study multi-guessing games.
They considered a parameter $\mu_g(G)$ that is the maximum number of colors $h$ such that the bears win the game $(G,h,g)$.
The difference between $\mu_g$ and $\hat{\mu}$ is the following.
If $\mu_g(G) \geq k$, then the bears win the game $(G,k,g)$ and $\hat{\mu} \ge \frac{k}{g}$.
If $\hat{\mu}(G) \geq \frac{p}{q}$, then there are $h,g \in \N$ such that $\frac{p}{q} = \frac{h}{g}$ and the bears win the game $(G,h,g)$.
However, it does not imply that the bears would win the game $(G,p,q)$.
In this section, we prove that if the bears win the game $(G,h,g)$ then they win the game $(G,kh,kg)$ for any constant $k \in \N$.
The opposite implication does not hold---we discuss a counterexample at the end of this section.
Unfortunately, this property prevents us from using our algorithm, which computes $\hat{\mu}$, to compute also $\mu$ of chordal graphs.

Moreover, by definition, the parameter $\hat{\mu}$ does not even have to be a rational number.
In such a case, for each $p,q \in \N$, it holds that
\begin{itemize}
 \item If $\frac{p}{q} < \hat{\mu}(G)$ then there are $h,g \in \N$ such that $\frac{p}{q} = \frac{h}{g}$ and the bears win the game $(G,h,g)$.
 \item If $\frac{p}{q} > \hat{\mu}(G)$ then the demon wins the game $(G,p,q)$.
\end{itemize}
For example, the fractional hat chromatic number $\hat{\mu}(P_3)$ of the path $P_3$ is irrational.
In the case of an irrational $\hat{\mu}(G)$, our algorithm computing the value of $\hat{\mu}$ of chordal graphs outputs an estimate of $\hat{\mu}(G)$ with arbitrary precision.
We finish this section with a proof that the multi-guessing game is in some sense monotone.
\begin{observation}
 \label{obs:Monotonicity}
 Let $k \in \N$.
 If a game $\cH = (G, h, g)$ is winning, then the game $\cH_k = (G, k\cdot h, k \cdot g)$ is winning as well.
\end{observation}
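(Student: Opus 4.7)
The plan is to construct a winning strategy for $\cH_k=(G,kh,kg)$ directly from a winning strategy $(\Gamma_v)_{v\in V}$ for $\cH=(G,h,g)$ by encoding each of the $kh$ colors as a pair consisting of an "$h$-part'' and a "$k$-part''. Concretely, identify the color set $[kh]$ with $[k]\times[h]$ via the bijection $c\mapsto(\lfloor c/h\rfloor, c\bmod h)$, and for an arrangement $\varphi\colon V\to[kh]$ of $\cH_k$, let $\psi\colon V\to[h]$ be the projection $\psi(v)=\varphi(v)\bmod h$. Since a bear on $v$ sees all neighbors' $[kh]$-colors, it can in particular extract the values $(\psi(u))_{u\in N(v)}$ and feed them into $\Gamma_v$.

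The new strategy $\Gamma'_v$ for $\cH_k$ takes the $g$ guesses that $\Gamma_v$ produces on the projected input and lifts each of them to all $k$ preimages under the projection. Formally,
\[
\Gamma'_v\bigl((c_u)_{u\in N(v)}\bigr) \;=\; \bigl\{\,a\cdot h + b \;:\; a\in[k],\; b\in \Gamma_v\bigl((c_u \bmod h)_{u\in N(v)}\bigr)\bigr\}.
\]
Because the sets $\{a\cdot h + b : b\in[h]\}$ are pairwise disjoint for distinct $a\in[k]$, the set $\Gamma'_v$ has exactly $k\cdot g$ elements, so $\Gamma'_v$ is a valid strategy for $\cH_k$.

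To verify that $(\Gamma'_v)_{v\in V}$ is winning, fix any hats arrangement $\varphi$ for $\cH_k$ and let $\psi$ be its projection. Since $(\Gamma_v)_{v\in V}$ is winning for $\cH$, there exists a vertex $v$ such that $\psi(v)\in\Gamma_v\bigl((\psi(u))_{u\in N(v)}\bigr)$. Writing $\varphi(v)=a\cdot h + \psi(v)$ with $a=\lfloor\varphi(v)/h\rfloor\in[k]$, we see directly from the definition that $\varphi(v)\in\Gamma'_v\bigl((\varphi(u))_{u\in N(v)}\bigr)$, so the bear on $v$ guesses correctly.

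There is essentially no obstacle here; the argument is a routine product construction that exploits the fact that each bear sees enough information to simulate the original game on the projected arrangement, and the $k$-fold lifting fits exactly within the enlarged guessing budget $kg$. The same idea explains why monotonicity runs only in the direction stated: shrinking $(kh,kg)$ back to $(h,g)$ would require choosing in advance which "coset'' $a\in[k]$ to target, which the demon can then avoid.
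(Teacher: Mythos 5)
Your construction is correct and is essentially identical to the paper's proof: both interpret each of the $kh$ colors as a pair from $[k]\times[h]$, simulate the original strategy on the $[h]$-component, and lift each of the $g$ guesses to all $k$ preimages. Your write-up just spells out the bijection and the verification more explicitly than the paper does.
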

\begin{proof}
 We derive a winning strategy for the game $\cH_k$ from a winning strategy for $\cH$.
 Each bear interprets a color in $[k\cdot h]$ as a pair $(i,c)$ where $i \in [k]$ and $c \in [h]$.
 Let $A_v$ be guesses of the bear on $v$ in the game $\cH$.
 For the game $\cH_k$, a strategy of the bear on $v$ is to make guesses $\bigl\{(i,c) \mid i \in [k], c \in A_v\bigr\}$.
 It is straight-forward to verify that this is a winning strategy for $\cH_k$.
\end{proof}

\begin{lemma}
\label{lem:Monotonicity}
 Let $\bigl(G = (V,E) ,h,g \bigr)$ be a winning hat guessing game.
 Let $r'$ be a rational number such that $r' \leq h/g$.
 Then, there exist numbers $h', g' \in \N$ such that $h'/g' = r'$ and the hat guessing game $(G,h',g')$ is winning.
\end{lemma}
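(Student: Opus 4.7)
The plan is to combine the scaling operation from Observation~\ref{obs:Monotonicity} with a hatness-reduction step. First I would write $r' = p/q$ with $p, q \in \N$ and take the candidates $h' := pg$ and $g' := qg$, so that $h'/g' = p/q = r'$; the goal then becomes showing that $(G, pg, qg)$ is winning. From the hypothesis $r' \leq h/g$ we read off $pg \leq qh$, and Observation~\ref{obs:Monotonicity} applied with $k = q$ tells us that $(G, qh, qg)$ is already winning. So the remaining task is to lower the hatness from $qh$ down to $pg$ without touching the guessing number $qg$.

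For that I would isolate the following sub-claim: if $(G, H, G_0)$ is winning and $G_0 \leq H' \leq H$, then $(G, H', G_0)$ is also winning. The way to see it is to take any winning strategy $(\Gamma_v)$ for $(G, H, G_0)$ and produce a strategy $(\Gamma'_v)$ for $(G, H', G_0)$ by letting each bear first run $\Gamma_v$ on its observed neighbors' colors (which lie in $[H']\subseteq[H]$) to obtain a guess set $A \subseteq [H]$ of size $G_0$, and then replacing any element of $A \setminus [H']$ by a distinct ``filler'' color chosen from $[H'] \setminus A$, outputting the resulting set $A' \subseteq [H']$ of size $G_0$. The size count $|[H'] \setminus A| = H' - |A \cap [H']|$ versus $|A \setminus [H']| = G_0 - |A \cap [H']|$ shows the fillers are available precisely when $H' \geq G_0$. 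Correctness is then automatic: on any arrangement $\varphi \colon V \to [H']\subseteq[H]$ the original strategy yields some correct bear $v$ with $\varphi(v) \in A$, and then $\varphi(v) \in [H']$ forces $\varphi(v) \in A \cap [H'] \subseteq A'$.

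Finally, I would instantiate the sub-claim with $H = qh$, $H' = pg$, $G_0 = qg$. The side condition $H' \geq G_0$ becomes $p \geq q$, i.e.\ $r' \geq 1$, which is exactly the regime where the target game $(G, h', g')$ is even well-defined (since $\binom{[h']}{g'}$ is empty once $g' > h'$, so the $r' < 1$ case does not arise). The hard part will be precisely this bookkeeping step, namely ensuring that the truncated guess set remains a set of the prescribed size $G_0$; the filler argument is what makes the hatness-reduction step go through, and everything else reduces to the already-proven scaling observation.
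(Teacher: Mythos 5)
Your proof is correct and reaches the same conclusion, but via a different second step. Both you and the paper begin by scaling the known winning game with Observation~\ref{obs:Monotonicity}; the paper then fixes the hatness at $\ell = \mathrm{LCM}(h,p)$ and simply \emph{increases the guessing number} from $\ell g/h$ to $\ell q/p$, which preserves winnability for the trivial reason that extra guesses only help. You instead fix the guessing number at $qg$ and \emph{decrease the hatness} from $qh$ to $pg$, which requires your restriction-plus-filler sub-claim: any arrangement over the smaller palette is an arrangement over the larger one, so the old strategy still has a correct guesser, and the fillers keep each guess set inside $\binom{[H']}{G_0}$. That sub-claim is a slightly stronger and reusable monotonicity statement (fewer colors never hurts the bears), bought at the cost of a small extra argument and the side condition $H' \geq G_0$, i.e.\ $r' \geq 1$. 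On the degenerate case $r' < 1$ you declare the target game ill-posed because $\binom{[h']}{g'}$ is empty; the paper's own construction also produces $g' > h'$ in that regime and asserts winnability anyway, so neither treatment is cleaner there, and within the intended regime $r' \geq 1$ your argument is complete.
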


\begin{proof}
 Let $p, q \in \N$ such that $r' = p/q$ and $\text{GCD}(p, q) = 1$.
 Let\footnote{GCD stands for the greatest common divisor and LCM stands for the least common multiple.} $\ell = \text{LCM}(h, p)$.
 
 Let $\bar{h} = \ell, \bar{g} = \ell \cdot g/h$.
 By Observation~\ref{obs:Monotonicity} for $k = \ell / h$, the game $(G,\bar{h},\bar{g})$ is winning.
 Let $h' = \ell$ and $g' = \ell\cdot q/p$.
 Since $p/q \leq h/g$ by the assumption, it holds that $g' \geq \bar{g}$.
 Thus, the bears have a strategy for $(G,h',g')$, as we increased the number of guesses and the hatness does not change ($h' = \bar{h} = \ell$).
 Moreover, $h'/g' = p/q = r'$. 
\end{proof}

It is straight-forward to prove a generalization of Lemma~\ref{lem:Monotonicity} for non-uniform games.
However, for simplicity, we state it only for the uniform games.
By the proof of the previous lemma, we know that we can use a strategy for $(G,h,g)$ to create a strategy for a game $(G, k\cdot h, k \cdot g + \ell)$ for arbitrary $k,\ell \in \N$ where $k \cdot g + \ell \le k\cdot h$.
However, it is unclear whether this also holds in general, i.e., given a winning strategy for a fractional hat chromatic number $h/g$, is it always possible to have a winning strategy for a decreased fraction $h'/g' < h/g$ where the hatness $h'$ and the guessing number $g'$ can be changed arbitrarily?
It is true for cliques.
We show in Section~\ref{sec:FirstSteps} that the bears win the game $(K_n, h, g)$ if and only if $h/g \leq n$.
However, it is not true in general.
For example, for $n$ large enough it holds that $\hat{\mu}(P_n) \geq 3$, as we show in Section~\ref{sec:Applications} that $\hat{\mu}(P_n)$ converges to 4 when $n$ goes to infinity.
However, Butler et al.~\cite{Butler08Intro} proved that $\mu(T) = 2$ for any tree $T$.
Thus, the bears lose the game $(P_n, 3, 1)$.

\section{Basic Blocks}
\label{sec:FirstSteps}
In this section, we generalize some results of Kokhas et al.~\cite{Kokhas2021CliquesI,Kokhas2021CliquesII} about cliques and strategies for graph products, which we use for proving our main result.
The single-guessing version of the next theorem (without the algorithmic consequences) was proved by Kokhas et al.~\cite{Kokhas2021CliquesI,Kokhas2021CliquesII}.
\begin{theorem}
\label{thm:Clique}
 Bears win a game $\bigl(K_n = (V,E), \bh, \bg\bigr)$ if and only if
 \[
  \sum_{v \in V} \frac{g_v}{h_v} \geq 1.
 \]
 Moreover, if there is a winning strategy, then there is a winning strategy $(\Gamma_v)_{v \in V}$ such that each $\Gamma_v$ can be described by two linear inequalities whose coefficients can be computed in linear time.
\end{theorem}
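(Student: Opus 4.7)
The plan is to prove both directions with a single circle-based construction. For the sufficient direction, I identify each color $c \in [h_v]$ with the point $c/h_v$ on the unit circle $\mathbb{R}/\mathbb{Z}$, and define the global quantity $s(\varphi) = \sum_{v \in V} \varphi(v)/h_v \pmod 1$. Because the visibility graph is $K_n$, every bear $v$ sees every other bear, and so can compute $T_v := s(\varphi) - \varphi(v)/h_v \pmod 1$ from the hats it sees. I then partition the circle into half-open arcs $I_v$, one per vertex, with $\mathrm{length}(I_v) = g_v/h_v$; laid end-to-end, these arcs cover $\mathbb{R}/\mathbb{Z}$ precisely when $\sum_v g_v/h_v \geq 1$. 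The strategy of bear $v$ is to guess every color $c$ for which $c/h_v + T_v \in I_v$. Since the $h_v$ points $0, 1/h_v, \dots, (h_v-1)/h_v$ are equally spaced, any half-open arc of length $g_v/h_v$ contains exactly $g_v$ of them, so bear $v$ indeed produces a set of size $g_v$. For the actual arrangement, bear $v$ guesses correctly iff $s(\varphi) \in I_v$, and since the arcs cover the circle, at least one bear always wins.

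For the converse, I would use a double-counting argument. Assume the bears have a winning strategy $(\Gamma_v)_{v\in V}$. For each $v$, the number of arrangements on which bear $v$ guesses correctly is exactly $g_v \prod_{u \neq v} h_u$, because fixing $\varphi\!\restriction_{V \setminus \{v\}}$ determines the guess set, which contains $g_v$ correct choices for $\varphi(v)$. Summing over $v$, the number of (vertex, arrangement) pairs with a correct guess equals $\bigl(\sum_v g_v/h_v\bigr) \cdot \prod_v h_v$. If $\sum_v g_v/h_v < 1$, this is strictly smaller than the total number of arrangements $\prod_v h_v$, so some arrangement escapes every bear, contradicting winning.

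For the ``moreover'' part, observe that the guess set $\{c \in [h_v] : c/h_v + T_v \in I_v\}$ becomes, after multiplying through by $h_v$, an interval of length $g_v$ inside $\mathbb{Z}/h_v\mathbb{Z}$; this is exactly the conjunction of two linear inequalities of the form $c \geq L_v$ and $c < L_v + g_v$, interpreted with the natural wrap-around modulo $h_v$. The offset $L_v$ depends linearly on the left endpoint $a_v$ of $I_v$ and on the integer $\lfloor h_v T_v \rfloor \bmod h_v$, and the $a_v$ can all be obtained by a single prefix-sum pass over the lengths $g_v/h_v$ in linear time.

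The main obstacle is bookkeeping rather than mathematics: one has to be careful with the wrap-around both when arguing that every half-open arc of length $g_v/h_v$ contains exactly $g_v$ of the grid points and when translating an arc-membership condition into the promised pair of linear inequalities, and one has to keep the quantities $T_v$ in rational form (e.g.\ with a common denominator) so that the membership test is effective. Once these are in place, both the existence of the strategy and the complexity claim follow directly from the construction above.
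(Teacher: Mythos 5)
Your proposal is correct and is essentially the paper's own proof: rescaling your unit circle $\mathbb{R}/\mathbb{Z}$ by $\ell = \mathrm{LCM}(h_{v_1},\dots,h_{v_n})$ turns your $s(\varphi)$, your arcs $I_v$ of length $g_v/h_v$, and your membership test into the paper's sum $s=\sum_i c_i d_i \bmod \ell$, its intervals $Q_i$ of length $d_i g_i$, and its two linear inequalities (with the advantage that everything is then integral, so no floors or common-denominator bookkeeping are needed). The double-counting converse is likewise the argument the paper uses.
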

\begin{proof}
 The proof follows the proof of Kokhas et al.~\cite{Kokhas2021CliquesII} for the single-guessing game.
 First, suppose that $\sum_{v \in V} g_v/h_v < 1$ and fix some strategy of bears.
 A bear on $v$ guesses correctly the color of his hat in exactly $(g_v/h_v)$-fraction of all possible hat arrangements.
 Thus, if the sum is smaller than one, there is a hat arrangement where no bear guesses the color of his hat correctly.
 
 Now suppose the opposite inequality holds, i.e., $\sum_{v \in V} g_v/h_v \geq 1$.
 Let $V(K_n) = \{v_1,\dots,v_n\}$.
 For simplicity, we denote $h_i = h_{v_i}$ and $g_i = g_{v_i}$.
 Let $\ell = \text{LCM}(h_1,\dots,h_n)$ and $d_i = \ell/h_i$ (note that $d_i \in \N$).
 Let the bear on $v_i$ receive a hat of color $c_i \in [h_i]$, and let
 \[
  s = \sum_{1 \leq i \leq n} c_i\cdot d_i \pmod{\ell}.
 \]
 The bears cover the set $[\ell]$ by disjoint intervals $Q_i$ of length $d_i \cdot g_i$.
 A bear on $v_i$ makes his guesses according to a hypothesis that $s$ is in an interval $Q_i$ and we will show that he guesses correctly if $s \in Q_i$.
 More formally, for $b_i = \sum_{j < i} d_j\cdot g_j$ we define the interval $Q_i$ as $\{b_i,\dots, b_i + d_i\cdot g_i - 1\}$.
 Note that the union of intervals $Q_1,\dots,Q_{i-1}$ is exactly the set $[b_i]$.
 A bear on $v_i$ computes $s_i = \sum_{v \neq v_i} c_v \cdot d_v$.
 Then, he guesses all such colors $a_i$ such that $s_i + a_i\cdot d_i \pmod{\ell}$ is in $Q_i$.
 Since $Q_i$ contains $d_i \cdot g_i$ consecutive natural numbers and $\ell$ is divisible by $d_i$, he makes at most $g_i$ guesses.
 If $s$ is in $Q_i$ then the bear on $v_i$ guesses the color of his hat correctly, because $s = s_i + c_i\cdot d_i \pmod{\ell}$ and thus the bear on $v_i$ includes the color $c_i$ in his guesses.
 
 Note that the union $Q$ of all intervals $Q_i$ is exactly the set
 \[
   \left\{0,\dots, \sum_{1 \leq i \leq n} \frac{\ell\cdot g_i}{h_i} - 1\right\}.
 \]
 By assumption, we have that $\{0,\dots,\ell - 1\} \subseteq Q$.
 Since $0 \leq s < \ell$ by definition, it follows that $s$ has to be in some interval $Q_i$.
 
 For the ``moreover'' part, the bear on a vertex $v_i$ guesses all colors $a_i \in [h_i]$ such that
 \[
  b_i \leq (s_i + a_i \cdot d_i) \bmod \ell < b_i + d_i \cdot g_i.
 \]
 Observe that $s_i$ is a linear function of hat colors of bears sitting on the vertices different from $v$ and the coefficients $b_i$ and $d_j$ can be computed in linear time.
 \end{proof}

 By Theorem~\ref{thm:Clique}, we can conclude the following corollary.
 \begin{corollary}
 \label{cor:Clique}
  For each $n \in \N$, it holds that $\hat{\mu}(K_n) = n$.
 \end{corollary}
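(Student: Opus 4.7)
The plan is to deduce the corollary as a direct consequence of Theorem~\ref{thm:Clique} applied to the uniform setting. First I would specialize the characterization to the game $(K_n, h, g)$: here $h_v = h$ and $g_v = g$ for every vertex, so the inequality $\sum_{v \in V} g_v/h_v \geq 1$ collapses to $n \cdot g/h \geq 1$, i.e.\ $h/g \leq n$. Hence the uniform clique game is winning exactly when $h/g \leq n$.

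With this equivalence in hand, the corollary reduces to taking a supremum. For the upper bound $\hat{\mu}(K_n) \leq n$, any winning game $(K_n,h,g)$ contributes a fraction $h/g$ with $h/g \leq n$, so the supremum over such fractions is at most $n$. For the matching lower bound, observe that the game $(K_n, n, 1)$ satisfies $n/1 = n \leq n$ and is therefore winning by Theorem~\ref{thm:Clique}, giving $\hat{\mu}(K_n) \geq n$.

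Combining the two bounds yields $\hat{\mu}(K_n) = n$, and note that the supremum is attained (by $h=n$, $g=1$). There is no real obstacle here: the entire argument is a one-line instantiation of Theorem~\ref{thm:Clique} followed by the trivial optimization $\sup\{h/g : h/g \leq n\} = n$; the only thing to be careful about is verifying that the supremum is indeed achieved, which is handled by exhibiting the explicit pair $(h,g) = (n,1)$.
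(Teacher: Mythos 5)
Your proposal is correct and matches the paper's intent exactly: the paper derives the corollary as an immediate consequence of Theorem~\ref{thm:Clique} (the uniform specialization $n\cdot g/h \geq 1$, i.e.\ $h/g \leq n$, with the supremum attained at $(h,g)=(n,1)$). No gaps.
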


 Kokhas et al.~\cite{Kokhas2021CliquesI} provided another proof of analogue of Theorem~\ref{thm:Clique} for the single-guessing game, which can be generalized with similar ideas.
 However, the second proof does not imply a polynomial time algorithm for computing the strategy on cliques.
 For the interested reader, we provide the second proof of Theorem~\ref{thm:Clique} in Appendix~\ref{sec:Clique2}.

Further, we generalize a result of Kokhas and Latyshev~\cite{Kokhas2021CliquesI}.
In particular, we provide a new way to combine two hat guessing games on graphs $G_1$ and $G_2$ into a hat guessing game on graph obtained by gluing $G_1$ and $G_2$ together in a specific way.

Let $G_1 = (V_1, E_1)$ and $G_2 = (V_2, E_2)$ be graphs, let $S \subseteq V_1$ be a set of vertices inducing a clique in $G_1$, and let $v \in V_2$ be an arbitrary vertex of $G_2$.
The \emph{clique join of graphs $G_1$ and $G_2$ with respect to $S$ and $v$} is the graph $G = (V,E)$ such that $V = V_1 \cup V_2 \setminus \{v\}$; and $E$ contains all the edges of $E_1$, all the edges of $E_2$ that do not contain $v$, and an edge between every $w \in S$ and every neighbor of $v$ in $G_2$.
See Figure~\ref{fig:CliqueJoin} for a sketch of a clique join.

\begin{figure}[h]
  \centering
  \includegraphics{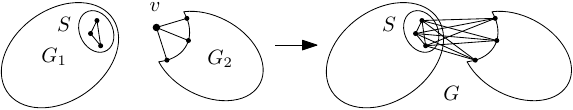}
  \caption{The clique join of graphs $G_1$ and $G_2$ with respect to $S$ and $v$.}%
  \label{fig:CliqueJoin}
\end{figure}

\begin{lemma}
\label{lem:CliqueJoin}
Let $\cH' = \bigl(G' =(V', E'), \bh', \bg'\bigr)$ and $\cH'' = \bigl(G'' = (V'', E''), \bh'', \bg''\bigr)$ be two hat guessing games and let $S \subseteq V'$ be a set inducing a clique in $G'$ and $v\in V''$.
Set $G$ to be the clique join of graphs $G'$ and $G''$ with respect to $S$ and $v$.
If the bears win the games $\cH'$ and $\cH''$, then they also win the game $\cH = (G, \bh, \bg)$ where
\begin{align*}
h_u =
\begin{cases}
  h'_u              &u\in V'\setminus S\\
  h''_u              &u\in V''\setminus \{v\}\\
  h'_u\cdot h''_v  &u \in S \text{, and}
\end{cases}\qquad
g_u =
\begin{cases}
  g'_u              &u\in V'\setminus S\\
  g''_u              &u\in V''\setminus \{v\}\\
  g'_u\cdot g''_v  &u \in S.
\end{cases}
\end{align*}
\end{lemma}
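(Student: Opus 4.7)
The plan is to combine the winning strategies $\Gamma^1$ and $\Gamma^2$ of $\cH_1$ and $\cH_2$ into one for $\cH$. Each bear $u\in S$ will interpret its hat, a color from $[h^1_u\cdot h^2_v]$, as a pair $(a_u,b_u)\in[h^1_u]\times[h^2_v]$: the first coordinate plays the role of its hat in $\cH_1$, while the second will serve as a virtual hat of the deleted vertex $v$ in a simulation of $\cH_2$.

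First I would define the strategies. A bear $u\in V_1\setminus S$ applies $\Gamma^1_u$ to the $a$-components visible in its neighborhood; its $G$-neighborhood coincides with its $G_1$-neighborhood. A bear $u\in S$ sees its $G_1$-neighbors (hence all of $S$) and, by the clique join, the whole set $N_{G_2}(v)$, so it can compute both $A_u := \Gamma^1_u(\cdot)\subseteq[h^1_u]$ of size $g^1_u$ and $B^* := \Gamma^2_v(\cdot)\subseteq[h^2_v]$ of size $g^2_v$, and guess the product $A_u\times B^*$, which has exactly the required $g^1_u\cdot g^2_v$ elements. A bear $u\in V_2\setminus\{v\}$ with $v\notin N_{G_2}(u)$ simply uses $\Gamma^2_u$, while a bear $u\in V_2\setminus\{v\}$ adjacent to $v$ uses $\Gamma^2_u$ but substitutes for the missing hat of $v$ a virtual value $\beta_u\in[h^2_v]$; since any such $u$ sees all of $S$ by the clique join, $\beta_u$ may be any chosen function of the pool $(a_w,b_w)_{w\in S}$ together with the remainder of $u$'s view.

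Next I would verify correctness by a case analysis on a fixed hat arrangement $\varphi$ and its induced $\cH_1$-projection $\varphi_1$ (using $a$-components on $S$). By winning of $\Gamma^1$, some $w_1\in V_1$ guesses correctly in $\cH_1$. If $w_1\in V_1\setminus S$, this bear also wins in $\cH$. Otherwise the set $W$ of $\cH_1$-winners inside $S$ is nonempty and each $w\in W$ satisfies $a_w\in A_w$. If some $w\in W$ additionally has $b_w\in B^*$, then $(a_w,b_w)\in A_w\times B^*$ and $w$ wins. Otherwise $b_w\notin B^*$ for every $w\in W$; fixing any such $w$ and defining $\varphi_2$ to agree with $\varphi$ on $V_2\setminus\{v\}$ and to put $\varphi_2(v):=b_w$, bear $v$ loses in $\cH_2$, so by winning of $\Gamma^2$ some $u''\in V_2\setminus\{v\}$ wins. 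If $u''\notin N_{G_2}(v)$, its $\cH$-guess equals its $\cH_2$-guess and we win; if $u''\in N_{G_2}(v)$, we win exactly when $\beta_{u''}=b_w$.

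The hard part will be exactly this last subcase: designing the functions $\beta_u$ so that, whenever we land in it, some adjacent rescue bear $u''$ has $\beta_{u''}\in\{b_w:w\in W\}$. Although every $u''\in N_{G_2}(v)$ sees the full pool $(b_w)_{w\in S}$, the set $W$ depends on $\varphi_1|_{V_1\setminus S}$, which $u''$ does not see. I expect to resolve this either by choosing $\Gamma^1$ carefully so that a representative winner in $S$ is determined from $\varphi_1|_S$ alone whenever no $V_1\setminus S$-bear wins, or by adjusting the combining rule for $S$-bears so that the $b$-part of their guess adapts rather than being pinned to $B^*$, removing the need to match $\beta_{u''}$ in the rescue step.
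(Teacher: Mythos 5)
Your setup is the right one --- interpreting the hat of each $u \in S$ as a pair $(a_u,b_u)$, running $\Gamma^1$ on the first components, and having the $S$-bears guess a product set of size $g^1_u \cdot g^2_v$ --- but the proof as written has a genuine gap, and you correctly identify where: the rescue step fails because a bear $u'' \in N_{G_2}(v)$ cannot know which $b_w$ to substitute for $v$'s missing hat, since the set $W$ of $\cH_1$-winners inside $S$ depends on hats in $V_1 \setminus S$ that $u''$ does not see. Neither of your proposed escape routes works as stated: you cannot assume $\Gamma^1$ has a ``representative winner determined by $\varphi_1|_S$'' (winning strategies need not have this structure), and there may be several $w \in W$ with distinct $b_w$, so there is not even a single consistent arrangement $\varphi_2$ to feed to $\Gamma^2$.

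The missing idea is to aggregate the second components into a \emph{single} virtual hat color for $v$, namely $s = \bigl(\sum_{u \in S} b_u\bigr) \bmod h^2_v$. Every bear in $N_{G_2}(v)$ sees all of $S$ and can compute $s$, so all of them simulate $\cH_2$ consistently with $\varphi_2(v) = s$; this replaces your per-bear functions $\beta_u$. Correspondingly, each $w \in S$ must not guess $A_w \times B^*$ but the shifted product $A_w \times B_w$ where $B_w = \bigl\{\bigl(c - \sum_{u \in S\setminus\{w\}} b_u\bigr) \bmod h^2_v \mid c \in B^*\bigr\}$, which $w$ can compute since it sees the rest of $S$. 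The case analysis then closes cleanly: run $\Gamma^2$ on the arrangement with $\varphi_2(v)=s$; if some $u_2 \neq v$ wins there, it wins in $\cH$; if only $v$ wins, then $s \in B^*$, and since $b_w = \bigl(s - \sum_{u\in S\setminus\{w\}} b_u\bigr) \bmod h^2_v$, this forces $b_w \in B_w$ for \emph{every} $w \in S$ simultaneously, so the $\cH_1$-winner $u_1 \in S$ has $(a_{u_1}, b_{u_1}) \in A_{u_1} \times B_{u_1}$. This is exactly the aggregation your second suggested fix was groping toward, and without it the argument does not go through.
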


\begin{proof}
Using winning strategies $(\Gamma'_v)_{v \in V'}$ and $(\Gamma''_v)_{v \in V''}$ for $\cH'$ and $\cH''$ respectively, let us construct a winning strategy for $\cH$.
For every bear $u \in S$, we interpret his color as a tuple $(c'_u, c''_u)$ where $c'_u \in [h'_u]$ and $c''_u \in [h''_v]$.
Also, we define an imaginary hat color of the bear on vertex $v$ as $s = (\sum_{u \in S} c''_u) \bmod h''_v$.

Every bear on $ w \in V' \setminus S$ plays according to the strategy $\Gamma'_w$ using only the color $c'_u$ for his every neighbor $u \in S$.
Every bear on $w \in V'' \setminus \{v\}$ plays according to the strategy $\Gamma''_w$ using the imaginary hat color $s$ of $v$.
And finally, every bear on vertex $w \in S$ computes a set of guesses $A_w$ by playing the strategy $\Gamma'_w$ and a set of guesses $B$ by playing the strategy $\Gamma''_v$.
Since the bear on $w$ can see every other vertex of $S$, he computes the set
\[B_w = \left\{\left(c - \textstyle \sum_{u \in S \setminus \{w\}} c''_u \right) \bmod h''_v \mid c \in B \right\}.\]
Finally, the bear on $w$ guesses the set $A_w \times B_w$.

Fix an arbitrary hat arrangement.
In the simulated hat guessing game $\cH'$, there is a vertex $u_1$ such that the bear on $u_1$ guessed correctly.
If $u_1 \not\in S$ then it also guessed correctly in $\cH$.
Likewise, there is a bear on a vertex $u_2$ in the simulated hat guessing game $\cH''$ that guessed correctly and we are done if $u_2 \neq v$.
The remaining case is when $u_1 \in S$ and $u_2 = v$.
Thus, the bear on $v$ includes the color $s$ in his guesses in the game $\cH''$.
It follows that for each $w \in S$ holds that if $(c'_w,c''_w)$ is a hat color of the bear on $w$, then $c''_w \in B_w$.
Since $u_1 \in S$, the bear on $u_1$ includes his hat color $(c'_{u_1}, c''_v)$ in his guesses $A_{u_1} \times B_{u_1}$.
\end{proof}

We remark that Lemma~\ref{lem:CliqueJoin} generalizes Theorem 3.1 and Theorem 3.5 of \cite{Kokhas2021CliquesI} not only by introducing multiple guesses but also by allowing for more general ways to glue two graphs together.
Thus, it provides new constructions of winning games even for single-guessing games.

\begin{figure}[h]
  \centering
  \includegraphics{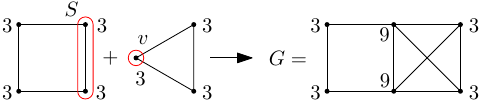}
  \caption{Applying Lemma~\ref{lem:CliqueJoin} on winning hat guessing games $(C_4, 3, 1)$ (see \cite{Szczechla2017CycleGraphs}) and $(K_3, 3, 1)$, we obtain a winning hat guessing game $(G, \bh, 1)$ where $G$ is the result of identifying an edge in $C_4$ and $K_4$, and $\bh$ is given in the picture.}
  \label{fig:CliqueJoinExample}
\end{figure}

\section{Independence Polynomial}
\label{sec:Polynomials}
% !TeX spellcheck = en_US
The multivariate \emph{independence polynomial} of a graph $G = (V,E)$ on variables $\mathbf{x} = (x_v)_{v\in V}$ is
\[P_G(\bx) = \sum_{\substack{I \subseteq V\\\mathclap{I \text{ independent set}}}} \  \prod_{v \in I} x_v.\]

First, we describe the connection between the multi-guessing game and the independence polynomial informally and later prove the mentioned statements formally.
Consider the game $(G,h,g)$ and fix a strategy of bears.
Suppose that the demon put on the head of each bear a hat of random color (chosen uniformly and independently).
Let $A_v$ be an event that the bear on the vertex $v$ guesses correctly.
Then, the probability of $A_v$ is exactly $g/h$.
Moreover, for any independent set $I$ it holds that $A_v$ is independent on all events $A_w$ for $w \in I, w \neq v$.
Thus, we can use the inclusion-exclusion principle (Proposition~\ref{prp:IncExc}) to compute the probability that $A_v$ occurs for at least one $v \in I$, i.e., at least one bear sitting on some vertex of $I$ guesses correctly.

Assume that no two bears on adjacent vertices guess correctly their hat colors at once; it turns out that if we plug $-g/h$ into all variables of the non-constant terms of $-P_G$, then we get exactly the fraction of all hat arrangements on which the bears win.
The non-constant terms of $P_G$ correspond (up to sign) to the terms of the formula from the inclusion-exclusion principle.
Because of that, we have to plug $-g/h$ into the polynomial $P_G$.

To avoid confusion with the negative fraction $-g/h$, we define \emph{signed independence polynomial} as $Z_G(\bx) = P_G(-\bx)$, i.e.,
\[Z_G(\bx) = \sum_{\mathclap{\substack{I \subseteq V\\I \text{ independent set}}}} \left(-1\right)^{|I|}\prod_{v \in I} x_v.\]
We also introduce the monovariate signed independence polynomial $U_G(x)$ obtained by plugging $x$ for each variable $x_v$ of $Z_G$.

Note that the constant term of any independence polynomial $P_G(\bx)$ equals to $1$, arising from taking $I = \emptyset$ in the sum from the definition of $P_G$.
When $U_G(g/h) = 0$ and no two adjacent bears guess correctly at the same time, then the bears win the game $(G,h,g)$ because the fraction of all hat arrangements, on which at least one bear guesses correctly, is exactly $1$, however, the proof is far from trivial.

Slightly abusing the notation, we use $Z_{G'}(\bx)$ to denote the independence polynomial of an induced subgraph $G'$ with variables $\bx$ restricted to the vertices of $G'$.
The independence polynomial $P_G$ can be expanded according to a vertex $v \in V$ in the following way.
\[
 P_G(\bx) = P_{G\setminus \{v\}}(\bx) + x_v P_{G \setminus N[v]}(\bx)
\]
The analogous expansions hold for the polynomials $Z_G$ and $U_G$ as well.
This expansion follows from the fact that for any independent set $I$ of $G$, it holds that either $v$ is not in $I$ (the first term of the expansion), or $v$ is in $I$ but in that case, no neighbor of $v$ is in $I$ (the second term).
The formal proof of this expansion of $P_G$ was provided by Hoede and Li~\cite{Hoede94IndPoly}.

For a graph $G$, we let $\cR(G)$ denote the set of all vectors $\br \in [0,\infty)^V$ such that $Z_G(\bw) > 0$ for all $0 \le\bw\le \br$, where the comparison is done entry-wise.
For the monovariate independence polynomial $U_G$, an analogous set to $\cR(G)$ would be exactly the real interval $[0,r)$ where $r$ is the smallest positive root of $U_G$.
(Note that $Z_G({\bf 0})=1$ and $U_G(0)=1$.)

Our first connection of the independence polynomial to the hat guessing game comes in the shape of a sufficient condition for bears to lose.
Consider the following beautiful connection between the Lov{\'a}sz Local Lemma and the independence polynomial due to Scott and Sokal~\cite{scott_sokal_2006}.

\begin{theorem}[\cite{scott_sokal_2006} Theorem 4.1]
\label{thm:LLL}
Let $G =(V,E)$ be a graph and let $(A_v)_{v \in V}$ be a family of events on some probability space such that for every $v$, the event $A_v$ is independent of $\{A_w \mid w \not\in N[v]\}$.
Suppose that $\bp \in [0,1]^V$ is a vector of real numbers such that for each $v$ we have $P(A_v) \le p_v$ and $\bp \in \cR(G)$.
Then
\[P\bigl(\bigcap_{v \in V}\bar{A_v}\bigr) \ge Z_G(\bp) > 0.\]
\end{theorem}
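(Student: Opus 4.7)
The approach is an induction on $|S|$ for a stronger, conditional form of the statement. The key algebraic tool is the signed version of the expansion stated in the excerpt, namely
\[Z_G(\bx) = Z_{G \setminus \{v\}}(\bx) - x_v \cdot Z_{G \setminus N^+(v)}(\bx).\]
The stronger claim I would prove is: for every $v \in V$ and every $S \subseteq V \setminus \{v\}$,
\[P\bigl(A_v \mid \textstyle\bigcap_{w \in S} \bar{A_w}\bigr) \le p_v \cdot \frac{Z_{G[S \setminus N(v)]}(\bp)}{Z_{G[S]}(\bp)}.\]
A preliminary step, proved by a short separate induction using the same expansion, is that $\bp \in \cR(G)$ implies $Z_{G[V']}(\bp) > 0$ for every $V' \subseteq V$, so the ratio above is well-defined and every conditioning event has positive probability.

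Given the stronger claim, the theorem follows by fixing an arbitrary ordering $v_1, \ldots, v_n$ of $V$, setting $S_i = \{v_1, \ldots, v_{i-1}\}$, and telescoping:
\[P\bigl(\textstyle\bigcap_{v \in V} \bar{A_v}\bigr) = \prod_{i=1}^{n}\bigl(1 - P(A_{v_i} \mid \textstyle\bigcap_{j<i} \bar{A_{v_j}})\bigr) \ge \prod_{i=1}^{n}\Bigl(1 - p_{v_i}\frac{Z_{G[S_i \setminus N(v_i)]}(\bp)}{Z_{G[S_i]}(\bp)}\Bigr) = \prod_{i=1}^{n}\frac{Z_{G[S_i \cup \{v_i\}]}(\bp)}{Z_{G[S_i]}(\bp)} = Z_G(\bp),\]
where the penultimate equality applies the expansion at $v_i$ inside $G[S_i \cup \{v_i\}]$ and the product telescopes because $S_i \cup \{v_i\} = S_{i+1}$.

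For the inductive step of the stronger claim, I would partition $S = T \sqcup U$ with $T = S \cap N(v)$ and $U = S \setminus N(v)$ and rewrite
\[P\bigl(A_v \mid \textstyle\bigcap_{w \in S} \bar{A_w}\bigr) = \frac{P\bigl(A_v \cap \bigcap_{w \in T} \bar{A_w} \mid \bigcap_{w \in U} \bar{A_w}\bigr)}{P\bigl(\bigcap_{w \in T} \bar{A_w} \mid \bigcap_{w \in U} \bar{A_w}\bigr)}.\]
The numerator is at most $P(A_v \mid \bigcap_{w \in U} \bar{A_w}) = P(A_v) \le p_v$ by independence of $A_v$ from $\{A_w\}_{w \in U}$. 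For the denominator I would telescope over $T$ and apply the inductive hypothesis to each conditional (all with ambient conditioning sets strictly smaller than $S$); the same expansion then collapses the telescope into $Z_{G[S]}(\bp)/Z_{G[S \setminus N(v)]}(\bp)$, yielding exactly the desired bound.

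The main obstacle is the nested telescoping together with careful bookkeeping to ensure that every invocation of the inductive hypothesis is on a strictly smaller conditioning set and that $\bp$ remains inside the region $\cR$ of the relevant induced subgraph; the preliminary positivity fact handles precisely this point and guarantees that all the conditional probabilities are well-defined.
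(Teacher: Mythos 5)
The paper does not prove this statement --- it is imported verbatim from Scott and Sokal (their Theorem 4.1) --- and your argument is essentially the proof given in that source: the standard Lov\'asz Local Lemma induction in which $P\bigl(A_v \mid \bigcap_{w\in S}\bar{A_w}\bigr)$ is bounded by $p_v\,Z_{G[S\setminus N(v)]}(\bp)/Z_{G[S]}(\bp)$, and the expansion $Z_{G'}(\bx)=Z_{G'\setminus\{v\}}(\bx)-x_v Z_{G'\setminus N^+(v)}(\bx)$ makes both the outer and the inner products telescope to ratios of signed independence polynomials. The argument is correct; the one point to carry explicitly through the induction (which you rightly flag) is the accompanying bound $P\bigl(\bigcap_{w\in S}\bar{A_w}\bigr)\ge Z_{G[S]}(\bp)>0$, which guarantees that every conditioning event has positive probability.
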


\begin{proposition}
\label{prop:Losing}
A hat guessing game $\cH = (G=(V,E),\bh,\bg)$ is losing whenever $\br \in \cR(G)$ where $\br = (g_v/h_v)_{v \in V}$.
\end{proposition}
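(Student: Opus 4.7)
The plan is to fix an arbitrary strategy of the bears and show, via the probabilistic method, that there must exist at least one hat arrangement on which every bear guesses incorrectly. We invoke the independence polynomial through the Scott--Sokal form of the Lovász Local Lemma already stated as Theorem~\ref{thm:LLL}.

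First, suppose the demon draws each hat color $X_v$ uniformly and independently from $[h_v]$. Fix any strategy $(\Gamma_v)_{v\in V}$, and for each $v$ let $A_v$ be the event ``the bear on $v$ guesses correctly,'' i.e.\ $A_v = \{X_v \in \Gamma_v(X_{N(v)})\}$. Since $\Gamma_v(X_{N(v)})$ is a set of exactly $g_v$ colors from $[h_v]$ that is a function of $X_{N(v)}$ alone, while $X_v$ is uniform on $[h_v]$ and independent of $X_{N(v)}$, I get $\Pr[A_v] = g_v/h_v = r_v$.

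Next I need to verify the dependency-graph hypothesis of Theorem~\ref{thm:LLL}: for each $v$, the event $A_v$ is mutually independent of the family $\{A_w : w \notin N^+(v)\}$. The key observation is that if $w \notin N^+(v)$ then, by symmetry of the graph, $v \notin N^+(w)$, so $A_w$ is a function of $\{X_u : u \in N^+(w)\}$, none of which is $X_v$. Hence the whole family $\{A_w : w \notin N^+(v)\}$ is measurable with respect to $\sigma(X_u : u \neq v)$. Conditioning on all hats other than $X_v$, the guess set $\Gamma_v(X_{N(v)})$ becomes deterministic while $X_v$ remains uniform on $[h_v]$ and independent of everything, so $\Pr[A_v \mid X_{\neq v}] = g_v/h_v$ a.s. Averaging against the indicator of any event $B$ in $\sigma(A_w : w \notin N^+(v))$ yields $\Pr[A_v \cap B] = (g_v/h_v)\Pr[B] = \Pr[A_v]\Pr[B]$, which is the required mutual independence.

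Now I can apply Theorem~\ref{thm:LLL} with the vector $\bp = \br$. The hypothesis $\br \in \cR(G)$ is exactly the assumption of the proposition, and $\Pr[A_v] \le r_v$ holds with equality. The conclusion gives
\[
 \Pr\!\Bigl[\,\bigcap_{v \in V} \overline{A_v}\,\Bigr] \;\geq\; Z_G(\br) \;>\; 0,
\]
so there exists a hat arrangement $\varphi \colon V \to [h_v]$ on which no bear guesses correctly. Since the strategy was arbitrary, the game $\cH$ is losing. I do not foresee a serious obstacle; the only subtle point is verifying mutual independence rather than just pairwise independence, which is handled cleanly by the ``condition on everything but $X_v$'' argument above.
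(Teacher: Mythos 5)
Your proof is correct and follows essentially the same route as the paper's: compute $\Pr[A_v]=g_v/h_v$ under uniform random hats, establish that $A_v$ is mutually independent of $\{A_w \mid w\notin N^+(v)\}$ by conditioning on all hats other than $X_v$, and apply Theorem~\ref{thm:LLL} to conclude $\Pr\bigl[\bigcap_v \overline{A_v}\bigr]\ge Z_G(\br)>0$. Your treatment of the mutual-independence step is slightly more explicit than the paper's, but the argument is the same.
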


\begin{proof}
Suppose for a contradiction that $\cH$ is winning, and fix a strategy of the bears.
We let the demon assign a hat to each bear uniformly at random and independently from the other bears.
Let $A_v$ be the event that the bear on the vertex $v$ guesses correctly.
Observe, that $P(A_v) = \frac{g_v}{h_v}$ and the probability that the bears lose is precisely $P\bigl(\bigcap_{v \in V}\bar{A_v}\bigr)$.

Let us show that the event $A_v$ is independent of all events $A_w$ such that $w \not \in N[v]$.
Observe, that fixing arbitrary hat arrangement $\varphi$ on $V \setminus \{v\}$ uniquely determines the guesses of bears on all vertices except for $N(v)$.
In particular, for every vertex $w \not \in N[v]$, we know whether the bear on $w$ guessed correctly and thus the probability of $A_w$ conditioned by $\varphi$ is either 0 or 1.
On the other hand, the probability of $A_v$ conditioned by $\varphi$ is still $\frac{g_v}{h_v}$.
Therefore, $A_v$ is independent of any subset of $\{A_w \mid w \not\in N[v]\}$.

The claim follows since the graph $G$ and vector $\br$ satisfies the conditions of Theorem \ref{thm:LLL} and we obtain that $P(\bigcap_{v \in V}\bar{A_v}) \ge Z_G(\br) > 0$.
Therefore, there exists some hat arrangement in which all bears guess incorrectly.
\end{proof}

A strategy for a hat guessing game $\cH$ is \emph{perfect} if it is winning and in every hat arrangement, no two bears that guess correctly are on adjacent vertices.
We remark that perfect strategies exist, for example the strategy for a single-guessing game on a clique $K_n$ and exactly $n$ colors~\cite{Kokhas2021CliquesI}, or for a multi-guessing game on a clique $K_n$ and $h / g = n$ (Corollary~\ref{cor:Clique}).
The following proposition shows that a perfect strategy can occur only when $\br = (g_v/h_v)_{v \in V}$ (note $g_v \le h_v$ by definition) lies in some sense just outside of $\cR(G)$.

\begin{proposition}
\label{prop:Perfect}
If there is a perfect strategy for the hat guessing game $(G,\bh,\bg)$ then for $\br = (g_v/h_v)_{v \in V}$ we have that $Z_G(\br) = 0$ and $Z_G(\bw) \ge 0$ for every $0 \le \bw \le \br$.
\end{proposition}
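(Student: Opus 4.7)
The plan is to interpret the \emph{perfect} strategy probabilistically and read off both claims from inclusion--exclusion. I would let the demon draw a uniformly random hat arrangement and, for each $v\in V$, denote by $A_v$ the event that the bear on $v$ guesses correctly; then $P(A_v)=g_v/h_v=r_v$. Two observations control the intersection probabilities $P\bigl(\bigcap_{v\in I} A_v\bigr)$. First, perfectness forces $A_v\cap A_u=\emptyset$ whenever $vu\in E$, so the intersection vanishes unless $I$ is independent. Second, if $I$ is independent then every $v\in I$ has all of its neighbors outside $I$, so conditioning on the hats of $V\setminus I$ fixes the guess set of each $v\in I$ while the hats on $I$ remain mutually independent and uniform; hence this conditional probability equals $\prod_{v\in I}r_v$, and the same therefore holds unconditionally.

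Plugging these two observations into the inclusion--exclusion principle (Proposition~\ref{prp:IncExc}) yields
\[
 P\bigl(\bigcup_{v\in V} A_v\bigr)=\sum_{\emptyset\ne I\subseteq V}(-1)^{|I|+1} P\bigl(\bigcap_{v\in I} A_v\bigr)=\sum_{\emptyset\ne I\text{ ind.}}(-1)^{|I|+1}\prod_{v\in I}r_v=1-Z_G(\br).
\]
A winning strategy makes the left-hand side equal to $1$, which immediately gives $Z_G(\br)=0$.

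For the monotonicity statement I would ``thin'' the perfect strategy. Fix $\bw$ with $0\le \bw\le \br$; for every $v$ with $r_v>0$ let $B_v$ be a Bernoulli event of probability $w_v/r_v$, and take all $B_v$ mutually independent and independent of the random hats (for $v$ with $r_v=0$ we have $w_v=0$ and can take $B_v=\emptyset$). Set $C_v=A_v\cap B_v$. Three properties transfer cleanly: $P(C_v)=w_v$; $C_v\cap C_u\subseteq A_v\cap A_u=\emptyset$ for adjacent $v,u$; and for an independent set $I$ the factorization $P\bigl(\bigcap_{v\in I}C_v\bigr)=\prod_{v\in I}w_v$ follows by combining the conditional-independence statement above with the mutual independence of the $B_v$. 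Repeating the inclusion--exclusion computation yields $P\bigl(\bigcup_v C_v\bigr)=1-Z_G(\bw)$, and since a probability is at most $1$ we conclude $Z_G(\bw)\ge 0$.

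The single subtle step is the conditional-independence argument for independent sets: it uses essentially that the strategy is deterministic and that an independent set contains no neighbor of any of its members, so that once the hats outside $I$ are fixed, each bear in $I$ has a guess set measurable with respect to those variables and the correctness of his guess depends only on his own uniform, independent hat. Everything else is a mechanical consequence of inclusion--exclusion and the definition of $Z_G$.
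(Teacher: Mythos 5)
Your proof is correct. The first claim ($Z_G(\br)=0$) is argued exactly as in the paper, just phrased with probabilities instead of counting hat arrangements: perfectness kills all intersections over non-independent sets, independence of $I$ gives $P\bigl(\bigcap_{v\in I}A_v\bigr)=\prod_{v\in I}r_v$ by conditioning on the hats outside $I$, and inclusion--exclusion plus the winning condition forces $Z_G(\br)=0$. For the second claim your route genuinely differs. The paper first shows $Z_{G'}(\br)\ge 0$ for every induced subgraph $G'$ (by letting only the bears of $G'$ guess) and then lowers $\br$ to $\bw$ one coordinate at a time, using the vertex expansion $Z_{G'}(\bx)=Z_{G'\setminus\{v\}}(\bx)-x_vZ_{G'\setminus N^+(v)}(\bx)$ and an induction over all induced subgraphs to keep every term nonnegative. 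You instead thin the events: adjoining independent Bernoulli coins $B_v$ of probability $w_v/r_v$ and setting $C_v=A_v\cap B_v$ preserves the two structural facts (disjointness on edges, product formula on independent sets) with $r_v$ replaced by $w_v$, so the same inclusion--exclusion computation gives $P\bigl(\bigcup_v C_v\bigr)=1-Z_G(\bw)\le 1$, hence $Z_G(\bw)\ge 0$ in one stroke. Your argument is shorter and avoids the double induction, at the mild cost of enlarging the probability space; it also recovers the paper's intermediate fact $Z_{G'}(\bw)\ge 0$ for induced subgraphs by setting $w_v=0$ outside $V(G')$. The one point to state explicitly is the degenerate case $r_v=0$, which you do handle correctly ($w_v=0$ forces $C_v=\emptyset$, and both sides of the product formula vanish).
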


\begin{proof}
Fix a perfect strategy and set $m = \prod_{v \in V} h_v$ to be the total number of possible hat arrangements.
For any subset $S \subseteq V$, let $n_S$ be the number of hat arrangements such that every bear on vertex $v \in S$ guesses correctly (other bears are not forbidden from guessing correctly).
We claim that for any independent set $I \subseteq V$, we have $n_I = m\cdot \prod_{v \in I}\frac{g_v}{h_v}$.

Observe that by assigning the hats to the bears on $V \setminus I$, we fix the guesses of all bears on $I$.
Every bear on a vertex $v \in I$ guesses correctly exactly $g_v$ out of $h_v$ of his hat assignments.
Thus the total number of hat arrangements where every bear on the independent set $I$ guesses correctly is exactly
\[n_I = \prod_{v \in V \setminus I}h_v \cdot \prod_{v \in I}g_v = m\cdot \prod_{v \in I}\frac{g_v}{h_v}. \]

On the other hand, the perfect strategy guarantees that for any non-empty $S$ that is not an independent set, $n_S = 0$.
This allows us to use the inclusion-exclusion principle and count the exact total amount of hat arrangements such that at least one bear guesses correctly
\begin{align*}
\sum_{\emptyset\neq S \subseteq V} (-1)^{|S|+1} n_S &=
\sum_{\mathclap{\substack{\emptyset\neq I \subseteq V\\ I \text{ independent}}}}\ (-1)^{|I|+1} n_I =
m \cdot \sum_{\mathclap{\substack{\emptyset\neq I \subseteq V\\ I \text{ independent}}}}\ (-1)^{|I|+1}\prod_{v \in I}\frac{g_v}{h_v} = \\
&= m \cdot (1 - Z_G(\br)).
\end{align*}

Finally, the total amount of hat arrangements when at least one bear guesses correctly must be exactly $m$ since the bears win.
Therefore, we get $Z_G(\br) = 0$.

We prove the remaining claim in two steps.
First, we show that for every induced subgraph $G'$ of $G$ it holds that $Z_{G'}(\br) \ge 0$.
To that end, consider a modified hat guessing game where only bears on the vertices of $G'$ are allowed to guess and they play according to the original perfect strategy.
By the same argument as before, we can count the total amount of hat arrangements that are guessed correctly by this modified strategy as $m \cdot (1 - Z_{G'}(\br))$.
It implies $Z_{G'}(\br) \ge 0$ as the total number of hat arrangements is $m$.

Now consider any $0 \le\bw \le \br $.
Let $v_1, \ldots, v_n$ be an arbitrary ordering of the vertices of $G$ and let us define vectors $\bw^i$ for $0 \le i \le n$ as
\[
w^i_u = \begin{cases}
w_u \;&\text{if $u = v_j$ for $j \le i$, }\\
r_u &\text{if $u = v_j$ for $j > i$. }
\end{cases}
\]
Notice that $\bw^0 = \br$, $\bw^n = \bw$, and the vectors $\bw^i$ correspond to switching the coordinates of $\br$ into the coordinates of $\bw$ one by one.
We prove by induction on $i$ that for every induced subgraph $G'$ of $G$ it holds that $Z_{G'}(\bw^i) \ge 0$.

We already proved the fact for $i=0$.
Let $i \ge 1$ and let $G'$ be an arbitrary induced subgraph of $G$.
If $G'$ does not contain $v_i$ then $Z_{G'}(\bw^i) = Z_{G'}(\bw^{i-1}) \ge 0$ and we are done.
Otherwise, we have
\begin{align*}
Z_{G'}(\bw^i) &= Z_{G'\setminus \{v_i\}}(\bw^i) - w_{v_i} Z_{G'\setminus N[v_i]}(\bw^i) \\
&\ge Z_{G'\setminus \{v_i\}}(\bw^{i-1}) - r_{v_i} Z_{G'\setminus N[v_i]}(\bw^{i-1}) =
Z_{G'}(\bw^{i-1}) \ge 0
\end{align*}
where we first partition the independent sets of $G'$ according to their incidence with $v_i$ and then replace $\bw^i$ with $\bw^{i-1}$ where the inequality holds since $w_{v_i} \le r_{v_i}$ and $Z_{G'\setminus N(v_i)}(\bw^{i-1}) \ge 0$ from induction.
Finally, we notice that we obtained the independent polynomial $Z_{G'}$ evaluated in $\bw^{i-1}$ and apply induction.
Thus, $Z_G(\bw) \geq 0$ as $\bw = \bw^n$ and $G$ is an induced subgraph of itself.
\end{proof}

Scott and Sokal~\cite[Corollary 2.20]{scott_sokal_2006} proved that $Z_G(\bw) \ge 0$ for every $0 \le \bw \le \br$ if and only if $\br$ lies in the closure of $\cR(G)$.
Therefore, Proposition~\ref{prop:Perfect} further implies that if a perfect strategy for the game $(G,\bh,\bg)$ exists, then $\br = (g_v/h_v)_{v \in V}$ lies in the closure of $\cR(G)$.
And since $\br$ cannot lie inside $\cR(G)$ due to Proposition~\ref{prop:Losing}, it must belong to the boundary of the set $\cR(G)$.

The natural question is what happens outside of the closure of $\cR(G)$.
We proceed to answer this question for chordal graphs.

A graph $G$ is \emph{chordal} if every cycle of length at least $4$ has a chord.
For our purposes, it is more convenient to work with a different equivalent definition of chordal graphs.
For a graph $G = (V,E)$, a \emph{clique tree of $G$} is a tree $T$ whose vertex set is precisely the subsets of $V$ that induce maximal cliques in $G$ and for each $v \in V$ the vertices of $T$ containing $v$ induces a connected subtree.
Gavril~\cite{Gavril74CliqueTree} showed that $G$ is chordal if and only if there exists a clique tree of $G$.

\begin{theorem}
\label{thm:Chordal}
Let $G = (V,E)$ be a chordal graph and let $\br = (r_v)_{v \in V}$ be a vector of rational numbers from the interval $[0,1]$.
If $\br \not\in \cR(G)$ then there are vectors $\bg,\bh \in \N^V$ such that $g_v/h_v \le r_v$ for every $v \in V$ and the hat guessing game $(G,\bh,\bg)$ is winning.
\end{theorem}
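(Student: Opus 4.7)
My plan is to induct on the number of maximal cliques of $G$ using its clique tree. For the base case $G$ is a single clique: since $Z_G(\bx) = 1 - \sum_v x_v$ is decreasing in each coordinate, $\br \notin \cR(G)$ is equivalent to $\sum_v r_v \ge 1$, and Theorem~\ref{thm:Clique} supplies the game.

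For the inductive step I pick a leaf clique $K$ of the clique tree with parent $K'$, separator $S = K \cap K'$, and exclusive set $V_K = K \setminus S$. Iterating the standard expansion $Z_H(\bx) = Z_{H \setminus \{v\}}(\bx) - x_v Z_{H \setminus N^+(v)}(\bx)$ over the simplicial vertices in $V_K$ (each satisfying $N^+(v) = K$) yields
\[
Z_G(\bx) \;=\; Z_{G \setminus V_K}(\bx) \;-\; \bigl(\textstyle\sum_{v \in V_K} x_v\bigr)\, Z_{G \setminus K}(\bx).
\]
If $\br\restriction_{V \setminus V_K} \notin \cR(G \setminus V_K)$, induction gives a winning game on $G \setminus V_K$, which extends to $G$ by setting $g_v = 0$ for each $v \in V_K$. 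Otherwise I build the game on $G$ as a clique join (Lemma~\ref{lem:CliqueJoin}) of a clique game $\cH_1$ on $K$ and a smaller inductive game $\cH_2$. Specifically, let $G_2$ be the graph obtained from $G$ by deleting $V_K$ and identifying $S$ to a single new vertex $v_0$, whose neighborhood in $G_2$ is precisely the set of vertices in $V \setminus V(K)$ adjacent in $G$ to \emph{every} vertex of $S$. By Theorem~\ref{thm:Clique} I take $\cH_1$ on $K$ with ratios $r_v$ on each $v \in V_K$ and $r^1_u = \lambda\, r_u$ on each $u \in S$, where $\lambda \ge 1$ is chosen so that $\sum_v r^1_v = 1$, and I feed ratio $1/\lambda$ at $v_0$ to the inductive $\cH_2$ on $G_2$. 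The clique join then produces the final ratio $r^1_u \cdot (1/\lambda) = r_u$ at each $u \in S$, within budget. Crucially, by the choice of $N_{G_2}(v_0)$, the clique join is a \emph{subgraph} of $G$ (every join edge lies in $G$), so its winning strategy immediately transfers to $G$ by having each bear ignore the extra neighbors it sees.

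The main technical content is (i) that $G_2$ is itself chordal with strictly fewer maximal cliques than $G$, so induction applies, and (ii) that the induced vector $\br^{(2)}$ lies outside $\cR(G_2)$. For (i), chordality follows from a local argument: any induced cycle of length $\ge 4$ in $G_2$ must pass through $v_0$, and the interior vertices of such a cycle (each failing to be adjacent to some vertex of $S$) combine with an appropriately chosen $u \in S$ to produce an induced cycle of length $\ge 4$ in $G$, contradicting chordality; the count of maximal cliques drops because $K$ is gone and every new clique through $v_0$ is accounted for by an old clique of $G$ containing $S$. For (ii), $\br^{(2)} \notin \cR(G_2)$ should follow from the displayed expansion together with the specific value of $\lambda$ and a witness $\bw \le \br$ of $\br \notin \cR(G)$; I expect cleanly transferring such a witness from $G$ to $G_2$ and verifying all ratio inequalities to be the most delicate part of the proof.
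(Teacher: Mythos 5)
Your skeleton (induction on the clique tree, a leaf clique handled by Theorem~\ref{thm:Clique}, recombination via Lemma~\ref{lem:CliqueJoin}) is the same as the paper's, but the specific decomposition you chose fails, and it fails exactly at the step you flagged as the delicate one. The culprit is contracting $S$ inside the \emph{rest} of the graph. If some vertex $x \in V \setminus K$ is adjacent to a proper nonempty subset of $S$, then your choice of $N_{G_2}(v_0)$ (vertices adjacent to \emph{all} of $S$) makes the clique join a \emph{proper} spanning subgraph $H$ of $G$: every edge from $x$ to $S$ is lost. Deleting edges enlarges $\cR$ (compare $Z_{K_2}=1-x-y$ with $Z_{\overline{K_2}}=(1-x)(1-y)$), so $\br \notin \cR(G)$ does not give $\br \notin \cR(H)$, and when $\br \in \cR(H)$ Proposition~\ref{prop:Losing} says that \emph{no} game on $H$ with ratios at most $\br$ can be winning, whatever strategy you build. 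Concretely, take $V=\{a,s_1,s_2,b,c\}$ with maximal cliques $\{a,s_1,s_2\}$, $\{s_1,s_2,b\}$, $\{s_1,c\}$ (a chordal graph whose clique tree is a path in that order), choose the leaf $K=\{a,s_1,s_2\}$, so $S=\{s_1,s_2\}$ and $V_K=\{a\}$, and take $\br$ uniformly equal to $1/4$. Then $U_G(x)=1-5x+4x^2-x^3$ gives $U_G(1/4)=-1/64<0$, so $\br\notin\cR(G)$ (and one checks $\br$ restricted to $V\setminus\{a\}$ lies in $\cR(G\setminus\{a\})$, so your first branch does not fire). But your clique join is $H=G-s_1c$, in which $c$ is isolated, and $U_H(x)=(1-4x+x^2)(1-x)>0$ on $[0,1/4]$, so $\br\in\cR(H)$ and every game on $H$ with ratios at most $1/4$ is losing. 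Your claim (ii) fails directly as well: here $G_2$ is the single edge $\{v_0,b\}$ plus the isolated vertex $c$, $\lambda=3/2$, and the vector $(2/3,1/4,1/4)$ satisfies $1-2/3-1/4>0$, hence lies in $\cR(G_2)$, so the inductive hypothesis cannot supply $\cH_2$.

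The paper's proof avoids this by contracting on the other side: it keeps $S$ intact in the large piece $G'=G\setminus R$ (where $R=V_K$) and applies induction there with the ratios on $S$ rescaled to $r_v/\alpha_r$, $\alpha_r=1-\sum_{v\in R}r_v$; it contracts $S$ to a single vertex only inside the leaf clique, where no edge is lost because every vertex of $R$ is adjacent to all of $S$. The rescaling yields the exact identity $Z_{G'}(\bw')=Z_G(\bw)/\alpha_w$, which is what transfers the witness; your variant admits no such identity precisely because the terms $Z_{G\setminus N^+(u)}$ for $u\in S$ involve different subgraphs. (Your argument does go through when every vertex outside $K$ meeting $N(S)$ is adjacent to all of $S$, e.g.\ when $|S|=1$, which is why small examples look fine.) Two smaller omissions: you need the case $\sum_{v\in K}r_v\ge 1$ separately, since there $\lambda<1$ and your clique game is ill-defined, although the game is already winning on $K$ alone; and the first branch's ``$g_v=0$'' needs a word about whether $0\in\N$ and about vertices with $r_v=0$.
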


\begin{proof}
We prove the theorem by induction on the size of the clique tree of $G$.
Let $0 \le\bw \le \br$ be a witness that $\br \not\in \cR(G)$, i.e., $Z_G(\bw) \le 0$.

If $G$ is itself a complete graph, then $Z_G(\bw) \le 0$ implies that $\sum_{v \in V}w_v \ge 1$ and $\sum_{v \in V}r_v \ge \sum_{v \in V}w_v \ge 1$.
Thus, if we take the minimal vectors $\bg,\bh \in \N^V$ such that $g_v/h_v = r_v$ for each $v$, the assumptions of Theorem~\ref{thm:Clique} are satisfied and the hat guessing game $(G,\bh,\bg)$ is winning.

Otherwise, the clique tree of $G$ contains at least 2 vertices and we pick its arbitrary leaf $C$.
Let $R$ be the set of vertices that belong only to the clique $C$, and let $S = C \setminus R$.
We aim to split the graph into $G' = G[V\setminus R]$ and $G[C]$, apply induction to obtain winning strategies on these graphs, and then combine them into a winning strategy on $G$; see \Cref{fig:ChordalExample}.

If $\sum_{v \in C} r_v \ge 1$, then the game is winning already on the clique $G[C]$ due to Theorem~\ref{thm:Clique}, by letting $g_v/h_v = r_v$ for each $v \in C$.
Therefore, we can assume $\sum_{v \in C} r_v < 1$ which implies $\sum_{v \in C} w_v < 1$.
We define vectors $\bw' = (w'_v)_{v \in  V \setminus R}$ and $\br' =(r'_v)_{v \in V \setminus R}$ as
\begin{align*}
\begin{array}{l}
w'_v = \begin{cases}
w_v / \alpha_w &\text{if $v \in S$,}\\
w_v &\text{otherwise, and}
\end{cases}
\end{array}\
\qquad
r'_v = \begin{cases}
r_v / \alpha_r &\text{if $v \in S$,}\\
r_v &\text{otherwise,}
\end{cases}
\end{align*}
where $\alpha_r = 1 - \sum_{v \in R} r_v$ and $\alpha_w = 1 - \sum_{v \in R} w_v$.
Observe that $0 < \alpha_r \le \alpha_w$ and that for every $v \in V \setminus R$ we have $0 \le w'_v \le r'_v \le 1$.
In other words, $\br'$ and $\bw'$ are both vectors of numbers from $[0,1]$ such that $\bw' \le \br'$.

To simplify the rest of the proof, we introduce the following notation.
For any $u \in V$, let $Z_G(\bx;u)$ denote the independence polynomial restricted only to the independent sets containing $u$, i.e.,
\[Z_G(\bx; u) = \sum_{\mathclap{\substack{u \in I \subseteq V\\ I \text{ independent}}}} \; \left(-1\right)^{|I|} \prod_{v \in I} x_v.\]

With this in hand, we proceed to show that $Z_{G'}(\bw') = Z_G(\bw) / \alpha_w$.

\begin{align}
Z_G(\bw) &= \sum_{v \in R} Z_G(\bw; v) + \sum_{v \in S}Z_G(\bw; v) + Z_{G\setminus C}(\bw) \label{eq1}\\
&= \left(1 - \sum_{v \in R} w_v\right) \cdot Z_{G\setminus C}(\bw) + \sum_{v \in S}Z_{G\setminus R}(\bw;v) \label{eq2}\\
&= \alpha_w \cdot Z_{G\setminus C}(\bw') + \alpha_w \cdot \sum_{v \in S}Z_{G\setminus R}(\bw';v) \label{eq3} \\
&= \alpha_w \cdot  Z_{G\setminus R}(\bw') = \alpha_w \cdot  Z_{G'}(\bw') \label{eq4}
\end{align}

In (\ref{eq1}), we partition the independent sets in $G$ depending on their incidence with $C$.
The line (\ref{eq2}) follows since every independent set intersecting $R$ in $G$ can be written as a union of $v\in R$ and an independent set in $G\setminus C$ which allows us to collect the first and third terms.
At the same time, all independent sets intersecting $S$ in $G$  can be regarded as independent sets intersecting $S$ in $G \setminus R$.
In (\ref{eq3}), we replace $\bw$ with $\bw'$ which scales each term in the second sum by the factor $w_v/w'_v = \alpha_w$.
Finally, notice that the terms in (\ref{eq3}) describe (up to scaling by $\alpha_w$) the independent sets in $G \setminus R$ partitioned according to their incidence with $S$.
We collect them in (\ref{eq4}).

Since $\alpha_w > 0$ and $Z_G(\bw) \le 0$, we have $Z_{G'}(\bw') \le 0$ which witnesses that $\br' \not \in \cR(G')$.
Therefore, we can apply induction on $G'$ and $\br'$ to obtain functions $\bh', \bg'$ such that the hat guessing game $(G', \bh', \bg')$ is winning and $g'_v/h'_v \le r'_v$ for each vertex $v$.

Let $G''$ be the graph obtained from the clique $G[C]$ by contracting $S$ to a single vertex $u$ and define the vector $\br'' = (r''_v)_{v \in R \cup \{u\}}$ as
\[
r''_v =
\begin{cases}
r_v \;&\text{if $v \in R$,}\\
\alpha_r &\text{if $v = u$.}
\end{cases}
\]
Observe that $G$ is precisely the clique join of $G'$ and $G''$ with respect to $S$ and $w$.
Since $r''_u + \sum_{v \in R}{r''_v} = 1$, we can take the minimal vectors $\bh'', \bg'' \in \N^V$ such that $g''_v/h''_v = r_v$ for every $v$ and apply Theorem~\ref{thm:Clique} on $G''$ to show that the hat guessing game $(G'', \bh'', \bg'')$ is winning.
Finally, we construct the desired winning strategy by combining the two graphs and their respective strategies using Lemma~\ref{lem:CliqueJoin} since $r'_v \cdot r''_v = r_v$ for every $v \in S$.
\end{proof}

\begin{figure}[h]
  \centering
  \includegraphics{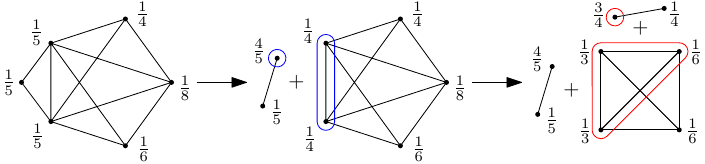}
  \caption{
      Application of Theorem~\ref{thm:Clique} on a chordal graph $G$ with vector $\br \in \cR(G)$.
      In each step, we highlight the clique $S$ and vertex $w$ that are used for Lemma~\ref{lem:CliqueJoin} to inductively build a strategy for $G$ from strategies on cliques given by Theorem~\ref{thm:Clique}.
	  Note that the number of colors and guesses may differ from the depicted ratios by a multiplicative factor.
  }%
  \label{fig:ChordalExample}
\end{figure}

Theorem~\ref{thm:Chordal} applied for the uniform polynomial $U_G$ immediately gives us the following corollary.
\begin{corollary}
\label{cor:Trees}
For any chordal graph $G$, the fractional hat chromatic number $\hat{\mu}(G)$ is equal to $1/r$ where $r$ is the smallest positive root of $U_G(x)$.
\end{corollary}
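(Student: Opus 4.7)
The plan is to derive the corollary by specializing the two main ingredients of Section~\ref{sec:Polynomials} to constant vectors. Observe that plugging the scalar $x$ into every coordinate of $\bx$ turns $Z_G(\bx)$ into $U_G(x)$, and the set $\cR(G)$ restricted to constant vectors $(x,\ldots,x)$ collapses to the interval $[0,r)$, where $r$ is the smallest positive root of $U_G$. The corollary will follow by pairing Proposition~\ref{prop:Losing} (to upper bound $\hat{\mu}(G)$) with Theorem~\ref{thm:Chordal} (to lower bound it).

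\emph{Upper bound} $\hat{\mu}(G)\le 1/r$. Suppose for contradiction that $(G,h,g)$ is a winning game with $h/g>1/r$, i.e.\ $g/h<r$. Then the constant vector $\br=(g/h,\ldots,g/h)$ satisfies $U_G(w)>0$ for all $0\le w\le g/h$, so $\br\in\cR(G)$. Proposition~\ref{prop:Losing} then declares the game losing, a contradiction. Hence every winning game obeys $h/g\le 1/r$, and taking the supremum gives $\hat{\mu}(G)\le 1/r$.

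\emph{Lower bound} $\hat{\mu}(G)\ge 1/r$. Since $U_G(0)=1$ and $r$ is a root of the polynomial $U_G$, we have $r\in(0,1]$ (for any non-empty chordal graph, one checks that $U_G$ has a positive root in $(0,1]$). Pick any rational $r'\in(r,1]$; the constant vector $(r',\ldots,r')$ cannot lie in $\cR(G)$ because the inequality $U_G(w)>0$ fails at $w=r$. Theorem~\ref{thm:Chordal} therefore supplies vectors $\bh,\bg\in\N^V$ with coordinates satisfying $g_v/h_v\le r'$ and a winning strategy for $(G,\bh,\bg)$. By Lemma~\ref{lem:Monotonicity} (applied componentwise to make the ratios uniform) or, more directly, by restricting to the common ratio, we extract a winning uniform game with $h/g\ge 1/r'$, giving $\hat{\mu}(G)\ge 1/r'$. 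Letting $r'$ decrease to $r$ along rationals yields $\hat{\mu}(G)\ge 1/r$.

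\emph{Main obstacle.} The only subtlety is that $r$ may be irrational, so one cannot simply apply Theorem~\ref{thm:Chordal} at the rate $r$ itself; the argument must approach $r$ from above through rationals and exploit the supremum in the definition of $\hat{\mu}$. A minor housekeeping point is to confirm that $r\le 1$ (so the hypothesis ``$r'\in[0,1]$'' of Theorem~\ref{thm:Chordal} is met), which follows from the alternating-sign structure of $U_G$ together with the existence of at least one vertex. With these two observations in place the two bounds match and the corollary follows.
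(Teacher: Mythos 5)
Your overall strategy is the same as the paper's: Theorem~\ref{thm:Chordal} supplies the lower bound $\hat{\mu}(G)\ge 1/r$ via rational rates approaching $r$ from above, and Proposition~\ref{prop:Losing} supplies the upper bound via rates below $r$. The lower bound and your handling of irrational $r$ are fine (and you are right that the non-uniform games produced by Theorem~\ref{thm:Chordal} need a small uniformization step before they say anything about $\hat{\mu}$; the paper glosses over this too).

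There is, however, one genuine gap in your upper bound. You assert that since $U_G(w)>0$ for all $0\le w\le g/h$, the constant vector $(g/h,\dots,g/h)$ lies in $\cR(G)$ --- equivalently, that ``$\cR(G)$ restricted to constant vectors collapses to $[0,r)$.'' But by the definition in the paper, $(x,\dots,x)\in\cR(G)$ requires $Z_G(\bw)>0$ for \emph{every} vector $0\le\bw\le(x,\dots,x)$, including non-constant ones; positivity of the univariate $U_G$ on the diagonal segment does not by itself give positivity of the multivariate $Z_G$ on the whole box $[0,x]^V$. (If you try to argue that the minimum of $Z_G$ over the box sits at the corner, note that the partial derivative of $Z_G$ in $x_v$ equals $-Z_{G\setminus N^+(v)}$, so this already presupposes nonnegativity of all induced-subgraph polynomials on the box --- essentially the statement being proved.) The implication is true, but it is a theorem: the paper closes exactly this step by citing Scott and Sokal's characterization of $\cR(G)$ as the set of points reachable from $\mathbf{0}$ by a path along which $Z_G$ stays positive, applied to the straight-line path $\lambda\mapsto\lambda\bw$, on which $Z_G(\lambda\bw)=U_G(\lambda w)>0$. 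You need to invoke that result, or reproduce an induction over induced subgraphs in the spirit of the second half of the proof of Proposition~\ref{prop:Perfect}, to justify the membership $(g/h,\dots,g/h)\in\cR(G)$.
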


\begin{proof}
Theorem~\ref{thm:Chordal} implies that $\hat{\mu}(G) \ge 1/r$.
For the other direction, let $(w_i)_{i \in \N}$ be a sequence of rational numbers such that $w_i < r$ for every $i$ and $\lim_{i \to \infty} w_i = r$.
Set $\bw_i = (w_i)_{v\in V}$.
Scott and Sokal~\cite[Thereom~2.10]{scott_sokal_2006} prove that $\br \in \cR(G)$ if and only if there is a path in $[0, \infty)^V$ connecting $\bf 0$ and $\br$ such that $Z_G(\bp) > 0$ for any $\bp$ on the path.
Taking the path $\{\lambda \bw_i \mid \lambda \in [0,1]\}$, we see that $Z_G(\lambda \bw_i) = U_G(\lambda \cdot w_i) > 0$ and thus $\bw_i \in \cR(G)$ for every $i$.
Therefore by Proposition~\ref{prop:Losing}, the hat guessing game $(G, h, g)$ is losing for any $h,g$ such that $g/h = w_i$ and $\hat{\mu}(G) \le 1/w_i$ for every $i$.
It follows that $\hat{\mu}(G) \le 1/r$.
\end{proof}

We would like to remark that the proof of Theorem~\ref{thm:Chordal} (and also Theorem~\ref{thm:Clique}) is constructive in the sense that given a graph $G$ and a vector $\br$ it either greedily finds vectors $\bg, \bh \in \N^V$ such that $g_v/h_v \le r_v$ together with a succinct representation of a winning strategy on $(G,\bh,\bg)$ or it reaches a contradiction if $\br \in \cR(G)$.
Moreover, it is easy to see that it can be implemented to run in polynomial time if the clique tree of $G$ is provided.
Combining it with the well-known fact that a clique tree of a chordal graph can be obtained in polynomial time (see Blair and Peyton~\cite{Blair1993Chordal}) we get the following corollary.

\begin{corollary}
\label{cor:Algo}
There is a polynomial-time algorithm that for a chordal graph $G = (V,E)$ and vector $\br$ decides whether $\br \in \cR(G)$.
Moreover, if $\br \not\in \cR(G)$ it outputs vectors $\bh, \bg \in \N^V$ such that $g_v/h_v \le r_v$ for every $v \in V$, together with a polynomial-size representation of a winning strategy for the hat guessing game $(G,\bh,\bg)$.
\end{corollary}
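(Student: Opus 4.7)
The plan is to implement the constructive proof of Theorem \ref{thm:Chordal} as a recursive algorithm, combined with standard polynomial-time preprocessing. I would first compute a clique tree $T$ of $G$ using the algorithm of Blair and Peyton~\cite{Blair1993Chordal}; the recursion then mirrors the theorem's proof, maintaining at each call a chordal subgraph (tracked through its current clique tree) and a rational vector $\br$.

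The base case is when the current graph is a clique: then $Z_G(\bx) = 1 - \sum_v x_v$, so $\br \in \cR(G)$ precisely when $\sum_v r_v < 1$, a condition checkable in polynomial time. When the sum is at least one, I invoke the constructive part of Theorem \ref{thm:Clique} to produce $(\bh,\bg)$ together with a strategy described by two linear inequalities per vertex. In the general step we pick a leaf $C$ of $T$ and compute $R$, $S$, and $\alpha_r = 1 - \sum_{v \in R} r_v$. If $\sum_{v \in C} r_v \ge 1$, we directly build a winning strategy on $G[C]$ via Theorem \ref{thm:Clique}, padded by trivial choices for the remaining bears (say $h_v$ a sufficiently large integer and $g_v = 1$) so that the bears in $C$ alone suffice to win. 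Otherwise $\alpha_r > 0$, we construct $\br'$ on $V \setminus R$ as in the proof and recurse on $G' = G[V \setminus R]$, whose clique tree is exactly $T$ with the leaf $C$ deleted. If the recursive call returns a strategy for $(G', \bh', \bg')$, we combine it with a Theorem \ref{thm:Clique} strategy on $G''$ via Lemma \ref{lem:CliqueJoin}. If instead it reports $\br' \in \cR(G')$, the identity $Z_G(\bw) = \alpha_w \cdot Z_{G'}(\bw')$ from the proof---valid for every $\bw \le \br$ because $\alpha_w \ge \alpha_r > 0$ in this branch---lifts positivity back to $G$, so $\br \in \cR(G)$ and we propagate that verdict.

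The main obstacle I expect is controlling the bit complexity of the intermediate data. Each recursive level rescales some entries of $\br$ by $1/\alpha_r$, where $\alpha_r$ has bit size polynomial in the original input; after $k$ levels the denominators of entries of $\br$ are products of at most $k$ such rationals, so their bit sizes grow by $O(k \cdot B)$ and remain polynomial since $k \le n$. Similarly, each invocation of Theorem \ref{thm:Clique} uses the LCM of denominators of the local vector, and each application of Lemma \ref{lem:CliqueJoin} multiplies some $h_v$ and $g_v$ by the corresponding values from the combined game; accumulating along the clique tree, the resulting $h_v$ and $g_v$ still fit in polynomially many bits. The final strategy is stored as the per-clique linear inequalities produced by Theorem \ref{thm:Clique} together with the clique-tree structure describing how Lemma \ref{lem:CliqueJoin} composes them, which yields a polynomial-size representation. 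Since every operation is arithmetic on rationals of polynomially bounded bit size and the recursion depth is $O(n)$, the whole algorithm runs in polynomial time.
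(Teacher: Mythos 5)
Your proposal is correct and follows essentially the same route as the paper: the paper's own proof of this corollary is just the observation that the proof of Theorem~\ref{thm:Chordal} is constructive and runs in polynomial time once a clique tree is computed via Blair and Peyton, which is exactly the recursion you describe. Your treatment is in fact more careful than the paper's on the bit-complexity of the rescaled vectors and of the accumulated $h_v, g_v$ (the only nitpick being that in the padding step a vertex with $r_v = 0$ should receive $g_v = 0$ rather than $g_v = 1$).
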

This result is consistent with the fact that chordal graphs are in general well-behaved with respect to Lov{\'a}sz Local Lemma---Pegden~\cite{Pegden12LLL} showed that for a chordal graph $G$, we can decide in polynomial time whether a given vector $\br$ belongs to $\cR(G)$.
We finish this section by presenting an algorithm that computes the fractional hat chromatic number of chordal graphs.

\begin{theorem}
\label{thm:AlgoValue}
 There is an algorithm $\cA$ such that given a chordal graph $G$ as an input, it approximates $\hat{\mu}(G)$ up to an additive error $1/2^k$.
 The running time of $\cA$ is $2k\cdot \textit{poly}(n)$, where $n$ is the number of vertices of $G$.
 Moreover, if $\hat{\mu}(G)$ is rational, then the algorithm $\cA$ outputs the exact value of $\hat{\mu}(G)$.
\end{theorem}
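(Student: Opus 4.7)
The plan is to combine Corollary~\ref{cor:Trees}, which identifies $\hat{\mu}(G)$ with $1/r$ for $r$ the smallest positive root of $U_G(x)$, with a binary search driven by the polynomial-time decision procedure of Corollary~\ref{cor:Algo}. For any rational $m > 0$, the uniform vector $(1/m)_{v \in V}$ belongs to $\cR(G)$ if and only if $U_G$ is strictly positive on $[0, 1/m]$, which happens exactly when $\hat{\mu}(G) < m$; Corollary~\ref{cor:Algo} decides this in polynomial time. First I fix a polynomial upper bound $N$ on $\hat{\mu}(G)$—for instance the $O(\Delta \log \Delta)$ bound from Section~\ref{sec:Applications}, or the Lov\'asz-Local-Lemma-style bound $\hat{\mu}(G) \leq e(\Delta + 1)$ that follows from Scott--Sokal.

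For the approximation, I initialize $L = 0$, $R = N$ and perform standard binary search: at each step I test the midpoint $m = (L + R)/2$ using Corollary~\ref{cor:Algo}, setting $R \leftarrow m$ if $\hat{\mu}(G) < m$ and $L \leftarrow m$ otherwise. After $t = k + \lceil \log_2 N \rceil$ iterations the interval $[L, R)$ has width at most $1/2^k$ and contains $\hat{\mu}(G)$, so any point inside is a valid additive $1/2^k$-approximation. Since each iteration costs $\mathrm{poly}(n)$ and $\log_2 N = O(\log n)$, the total running time is $(k + O(\log n)) \cdot \mathrm{poly}(n) \leq 2k \cdot \mathrm{poly}(n)$.

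For the exact-value branch, I invoke Theorem~\ref{thm:RationalRoot}: since $U_G(x)$ has integer coefficients and constant term $U_G(0) = 1$, every rational positive root of $U_G$ has the form $1/q$ for a positive integer $q$, so whenever $\hat{\mu}(G)$ is rational it is in fact a positive integer. After continuing the binary search until $R - L < 1$, the interval $[L, R)$ contains at most one positive integer $q$; if no such integer exists, $\hat{\mu}(G)$ is necessarily irrational and I return the approximation. Otherwise I verify $\hat{\mu}(G) = q$ by combining the test $(1/q)_v \notin \cR(G)$ from Corollary~\ref{cor:Algo} (which certifies $\hat{\mu}(G) \geq q$) with an explicit computation of $U_G$ as a polynomial—doable in polynomial time on chordal graphs via a clique-tree dynamic program using the recursion $U_G(x) = U_{G \setminus \{v\}}(x) - x \cdot U_{G \setminus N^+(v)}(x)$ on a simplicial elimination ordering—followed by Sturm's theorem to count positive roots of $U_G$ in $(0, 1/q]$. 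The count equals $1$ exactly when $1/q$ is the smallest positive root, i.e., $\hat{\mu}(G) = q$. The main obstacle to a naive argument is precisely this last check: observing $U_G(1/q) = 0$ together with $\hat{\mu}(G) \geq q$ does not imply $\hat{\mu}(G) = q$, since $U_G$ may have additional positive roots strictly smaller than $1/q$ (in which case $\hat{\mu}(G) \in (q, R)$ would be irrational); Sturm's theorem on the explicitly computed $U_G$ rules this out in polynomial time, and an equivalent route via Mahler's root-separation bound combined with $\mathrm{poly}(n)$ additional binary-search iterations works as well.
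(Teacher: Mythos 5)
Your proof is correct and follows essentially the same route as the paper: a binary search driven by the decision procedure of Corollary~\ref{cor:Algo}, combined with the rational root theorem applied to $U_G$ (constant term $1$) to conclude that a rational $\hat{\mu}(G)$ must be a positive integer. The one substantive difference is your final Sturm-theorem verification, and it is superfluous rather than, as you claim, the main obstacle: the theorem only asks that the output be exact \emph{when} $\hat{\mu}(G)$ is rational, not that the algorithm certify rationality. Once the search interval has width less than $\min\{1,2^{-k}\}$ it contains at most one integer $q$; if $\hat{\mu}(G)$ is rational it is an integer lying in that interval and hence equals $q$, so outputting $q$ is automatically exact, while if $\hat{\mu}(G)$ is irrational any point of the interval (including $q$) already meets the additive error bound. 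The scenario you guard against with Sturm's theorem ($U_G$ having a root smaller than $1/q$, forcing $\hat{\mu}(G)>q$) cannot coexist with $\hat{\mu}(G)$ being rational, since then $\hat{\mu}(G)$ would be a second integer in the interval. The paper exploits exactly this: it runs the halving procedure to precision $2^{-\max\{2k,3n\}}$ on $1/\hat{\mu}(G)\in[0,1]$, uses the bound $|a_d|\le 2^n$ to see that the interval isolates at most one candidate $1/q$ with $q\le 2^n$, and outputs that candidate if present; no explicit computation of $U_G$ or root counting is required. Your choice to search over $\hat{\mu}$ directly with an $O(\Delta)$ upper bound is a harmless (and arguably slightly cleaner) variant, since an interval of width below $1$ isolates an integer without any bound on $q$.
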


\begin{proof}
 First, suppose that $\hat{\mu}(G)$ is rational.
 Let $\hat{\mu}(G) = q/p$ for coprimes $p,q \in \N$.
 By Corollary~\ref{cor:Trees}, $1/\hat{\mu}(G) = p/q$ is the smallest positive root of the polynomial $U_G$.
 Let $U_G(x) = a_d x^d + \dots a_1 x + a_0$.
 Note that $a_0 = 1$ and for each $i \leq d$ holds that $|a_i| \leq 2^n$ because $|a_i|$ is exactly the number of independent sets of size $i$ in the graph $G$.
 By the rational root theorem (Theorem~\ref{thm:RationalRoot}), it holds that $p = 1$ and $q \leq 2^n$.

 The algorithm $\cA$ repeats a halving procedure which works as follows.
 We set the initial bounds $\ell_0 = 0$ and $u_0 = 1$.
 In a step $i$, let $r_i = (\ell_i + u_i) / 2$.
 We run the algorithm by Corollary~\ref{cor:Algo} to test if there are $h_i,g_i \in \N$ such that $g_i/h_i \leq r_i$ and the game $\cH_i = (G,h_i,g_i)$ is winning.
 If so, we set new bounds $\ell_{i+1} = \ell_i$ and $u_{i+1} = r_i$.
 On the other hand, if $\cH_i$ is not winning then we set $\ell_{i + 1} = r_i$ and $u_{i+1} = u_i$.
 Thus, for each $i$ it holds that $\ell_i \leq 1/\hat{\mu}(G) \leq u_i$.

 We make $s = \max\{2k, 3n\}$ steps.
 The length of the real interval $I_{s} = [\ell_{s}, u_{s}]$ is at most $1/2^{3n}$.
 It is easy to verify that the interval $I_{s}$ contains at most one rational number $1/q$ for $q \leq 2^n$.
 If so, we output the number $q$.
 Otherwise, we output a number $t$ such that $1/t$ is an arbitrary number in $I_s$.

 If $\hat{\mu}(G)$ is rational, then by Corollary~\ref{cor:Trees} and by the discussion above we found its value.
 Otherwise, we know that $|1/\hat{\mu}(G) - 1/t| \leq 1/2^{s}$ as the length of $I_s$ is exactly $1/2^{s}$.
 Since $s \geq 3n$ and $1/t \geq 1/n$ by Corollary~\ref{cor:Clique}, it follows by easy calculation that $|\hat{\mu}(G) - t| \leq 1/2^{s/2} \leq 1/2^k$.
 Thus, even if $\hat{\mu}(G)$ is irrational, then we estimate it with precision $1/2^k$.

 We ran the halving procedure at most $2k$-times and during each step we run the poly-time algorithm given by Corollary~\ref{cor:Algo}.
 Thus, the running time of $\cA$ is at most $2k\cdot \textit{poly}(n)$.
\end{proof}

\section{Applications}
\label{sec:Applications}
In this section, we present applications of the relation between the hat guessing game and independence polynomials which was presented in the previous section.

\subsection{Fractional Hat Chromatic Number is Almost Linear in the Maximum Degree}
\label{sec:Paths}
First, we prove that $\hat{\mu}(G)$ is asymptotically equal to $\Delta(G)$ up to a logarithmic factor.

\begin{proposition}
\label{prp:MaxDegreeLB}
The fractional hat chromatic number of any graph $G = (V,E)$ is at least $\Omega(\Delta/ \log \Delta)$.
\end{proposition}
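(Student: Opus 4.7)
The plan is to exhibit a star $K_{1,\Delta}$ as a (not necessarily induced) subgraph of $G$ and then apply Corollary~\ref{cor:Trees} to bound $\hat{\mu}(K_{1,\Delta})$ directly via its independence polynomial.

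First I would observe that $\hat{\mu}$ is monotone under taking subgraphs: if $V(H) \subseteq V(G)$ and $E(H) \subseteq E(G)$, then $\hat{\mu}(H) \le \hat{\mu}(G)$. Indeed, any winning strategy for $(H,h,g)$ transfers to $(G,h,g)$ by letting each bear on $v \in V(H)$ apply its $H$-strategy to the hats of its $H$-neighbors only (ignoring the extra visibility present in $G$), while every bear on $V(G) \setminus V(H)$ guesses an arbitrary fixed set. For any hat arrangement on $V(G)$, its restriction to $V(H)$ is won by some bear in $V(H)$, and that bear then also guesses correctly in the $G$-game.

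Since $G$ has a vertex $v$ of degree $\Delta$, the edges from $v$ to its neighbors form a subgraph isomorphic to $K_{1,\Delta}$, hence $\hat{\mu}(G) \ge \hat{\mu}(K_{1,\Delta})$. As $K_{1,\Delta}$ is a tree and therefore chordal, Corollary~\ref{cor:Trees} yields $\hat{\mu}(K_{1,\Delta}) = 1/r^*$, where $r^*$ is the smallest positive root of $U_{K_{1,\Delta}}$. Enumerating the independent sets of the star, the center gives a single set of size one, while the leaves contribute all $2^\Delta$ of their subsets, so a direct calculation yields
\[
U_{K_{1,\Delta}}(x) \;=\; (1-x)^\Delta - x.
\]

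On $[0,1]$ the function $(1-x)^\Delta$ strictly decreases from $1$ to $0$ while $x$ strictly increases from $0$ to $1$, so $U_{K_{1,\Delta}}$ has a unique root $r^* \in (0,1)$ and $U_{K_{1,\Delta}}(r) \le 0$ iff $r \ge r^*$. Plugging in $r = \frac{2\ln \Delta}{\Delta}$ and using $(1-r)^\Delta \le e^{-\Delta r} = \Delta^{-2}$, I see $U_{K_{1,\Delta}}(r) \le \Delta^{-2} - \frac{2\ln \Delta}{\Delta} < 0$ for all $\Delta \ge 2$, whence $r^* \le \frac{2\ln \Delta}{\Delta}$ and
\[
\hat{\mu}(G) \;\ge\; \hat{\mu}(K_{1,\Delta}) \;=\; \frac{1}{r^*} \;\ge\; \frac{\Delta}{2\ln \Delta} \;=\; \Omega\!\left(\frac{\Delta}{\log \Delta}\right).
\]

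The only ingredient not already in the excerpt is the one-paragraph subgraph monotonicity observation; everything else is an immediate application of Corollary~\ref{cor:Trees} to the star together with a standard estimate on the root of $(1-x)^\Delta - x$. I do not anticipate any substantial obstacle.
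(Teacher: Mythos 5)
Your proposal is correct and follows essentially the same route as the paper: reduce to the star $K_{1,\Delta}$ via subgraph monotonicity, apply Corollary~\ref{cor:Trees} to get $\hat{\mu}(K_{1,\Delta}) = 1/r^*$ with $(1-r^*)^\Delta = r^*$, and estimate $r^* = O(\log\Delta/\Delta)$. The only difference is that you spell out the monotonicity argument and the root estimate in slightly more detail than the paper does.
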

\begin{proof}
Let $H$ be a subgraph of $G$.
Note that $\hat{\mu}(H) \leq \hat{\mu}(G)$ as the bears can use a winning strategy for $H$ in $G$.
Let $S$ be a star of $\Delta(G) = \Delta$ leaves.
The graph $G$ contains $S$ as a subgraph.
We prove the proposition by giving a lower bound for $\hat{\mu}(S)$.

By Corollary~\ref{cor:Trees}, we have that $r = 1/\hat{\mu}(S)$ is the smallest positive root of $U_S(x)$.
The independence polynomial of $S$ is
\[
 U_S(x) = - x + \sum_{i = 0}^\Delta \binom{\Delta}{i} (-x)^i  = (1-x)^\Delta - x.
\]
The term $-x$ is given by the independent set containing only the vertex of degree $\Delta$.
The sum is given by all independent sets consisting of leaves of $S$.
Thus, it must hold that $(1 - r)^\Delta = r$.
By simple calculation, we conclude that $r = \Theta\bigl(\log \Delta / \Delta\bigr)$, which implies the assertion of the proposition.
\end{proof}

Farnik~\cite{Farnik2015Thesis} proved that $\mu_g(G) \in O\bigl(g\cdot \Delta(G)\bigr)$, from which we can deduce that $\hat{\mu}(G) \in O\bigl(\Delta(G)\bigr)$.
It gives with Proposition~\ref{prp:MaxDegreeLB} the following corollary that $\hat{\mu}(G)$ is almost linear in $\Delta(G)$.

\begin{corollary}
 \label{cor:DegreeBound}
 For any graph $G$, it holds that $\hat{\mu}(G) \in \Omega(\Delta / \log \Delta)$ and $\hat{\mu}(G) \in O(\Delta)$.
\end{corollary}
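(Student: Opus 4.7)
The statement is really just a bookkeeping lemma combining the lower bound from Proposition~\ref{prp:MaxDegreeLB} with Farnik's known upper bound on $\mu_g$, so I would present it in two short steps rather than as a fresh argument.

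First, for the lower bound there is nothing left to prove: Proposition~\ref{prp:MaxDegreeLB}, established in the same subsection, already shows $\hat{\mu}(G)\ge\hat{\mu}(S)=\Omega(\Delta/\log\Delta)$ where $S$ is a star with $\Delta$ leaves contained in $G$ (using monotonicity under taking subgraphs, which was recorded at the start of that proof). So the first sentence of the proof just cites Proposition~\ref{prp:MaxDegreeLB}.

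Second, for the upper bound I would invoke Farnik's bound $\mu_g(G)\le C\cdot g\cdot\Delta(G)$ for some absolute constant $C$ and all $g\in\N$, and translate it to $\hat{\mu}$ via the definitions. Concretely, if $(G,h,g)$ is a winning game then, by the definition of $\mu_g$, we have $h\le\mu_g(G)\le C\cdot g\cdot\Delta(G)$, hence $h/g\le C\cdot\Delta(G)$. Taking the supremum over all winning $(G,h,g)$ yields $\hat{\mu}(G)\le C\cdot\Delta(G)\in O(\Delta)$.

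There is no real obstacle here; the only thing to be careful about is keeping the definitions of $\mu_g$ and $\hat{\mu}$ straight (they were compared in Section~\ref{sec:Multiguess}), so that the implication from a bound on $\mu_g$ for every fixed $g$ to a bound on the supremum $\hat{\mu}$ is stated cleanly. Combining the two inequalities then gives the corollary.
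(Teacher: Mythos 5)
Your proposal is correct and matches the paper's own argument exactly: the lower bound is Proposition~\ref{prp:MaxDegreeLB}, and the upper bound is Farnik's $\mu_g(G) \in O\bigl(g \cdot \Delta(G)\bigr)$ translated to $\hat{\mu}$ via $h \le \mu_g(G)$ for any winning game $(G,h,g)$. The paper states this translation without spelling it out; your explicit supremum step is the right (and only) way to fill it in.
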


\subsection{Paths and Cycles}
In this section, we discuss the precise value of $\hat{\mu}$ of paths and cycles. 
It follows from Corollary~\ref{cor:DegreeBound}, that $\hat{\mu}(P_n)$ and $\hat{\mu}(C_n)$ are upper bounded by constants.
We prove that the fractional hat chromatic number of paths and cycles goes to 4 with their increasing length.

For a proof, we need a version of Lov\'{a}sz local lemma proved by Shearer.
\begin{lemma}[Shearer~\cite{Shearer1985LocalLemma}]
\label{lem:Shearer}
 Let $A_1,\dots,A_n$ be events such that each event is independent on all but at most $d$ other events.
 Let the probability of any events $A_i$ is at most $p$.
 If $d > 1$ and $p < \frac{(d-1)^{d-1}}{d^d}$, then there is non-zero probability that none of the events $A_1,\dots,A_n$ occurs. 
\end{lemma}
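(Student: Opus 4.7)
The plan is to derive Shearer's bound from the Scott--Sokal theorem (Theorem~\ref{thm:LLL}) combined with an explicit estimate on the signed independence polynomial of bounded-degree graphs. Let $G=(V,E)$ be the dependency graph of the events, with $vw\in E$ whenever $A_v$ is not independent of $A_w$; by hypothesis $\Delta(G)\le d$, and each $A_v$ is independent of $\{A_w : w\notin N^+(v)\}$ as required by Theorem~\ref{thm:LLL}. If I can exhibit a vector $\bp\in\cR(G)$ with $p_v=p$ at every vertex, then Theorem~\ref{thm:LLL} immediately yields $P(\bigcap_v \bar{A_v})\ge Z_G(\bp)>0$. Hence the whole task reduces to the purely combinatorial claim: for every graph $H$ of maximum degree at most $d$ and every $p<(d-1)^{d-1}/d^d$, the constant vector $(p,\ldots,p)$ lies in $\cR(H)$.

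For that claim I would use the path-characterization of $\cR(H)$ already invoked in the proof of Corollary~\ref{cor:Trees}: it suffices to check that $U_{H'}(\lambda p)>0$ for every $\lambda\in[0,1]$ and every induced subgraph $H'\subseteq H$. The natural inductive tool is the vertex-deletion identity
\[U_{H'}(x)=U_{H'\setminus v}(x)-x\,U_{H'\setminus N^+(v)}(x),\]
which, after dividing by $U_{H'\setminus v}(x)$, reformulates positivity of $U_{H'}(p)$ as a one-sided bound on the ratio $R_v(x)=U_{H'\setminus N^+(v)}(x)/U_{H'\setminus v}(x)$. The extremal case is the infinite $d$-regular tree: peeling off the root repeatedly turns the recursion into a fixed-point equation of the shape $R=1/(1-pR)^{d-1}$, which admits a positive real solution precisely up to the threshold $p^*=(d-1)^{d-1}/d^d$. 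For $p<p^*$ the map is a strict contraction, so the iterated ratios remain in a stable positive interval and all the truncated tree polynomials $U_{H'}(\lambda p)$ are strictly positive.

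The main obstacle is lifting this tree analysis to arbitrary bounded-degree dependency graphs, since the alternating signs in $U_{H'}$ prevent a naive term-by-term monotonicity argument. I would handle this by running the induction along a BFS ordering of $H'$ and tracking the ratios $R_v$ rather than the polynomials themselves: extra edges incident to $v$ beyond the tree unfolding only identify variables in the ensuing product-of-children expression, and one checks that such identifications cannot push the ratio outside the stable interval determined by the tree recursion. This is essentially the cluster-expansion induction underlying Shearer's original argument, and it is the part that deserves the most care in a rigorous write-up. Once set up, positivity of $U_{H'}$ at every scaled point $\lambda p$ follows by induction on $|V(H')|$, feeding back into the Scott--Sokal criterion and yielding $P(\bigcap_v \bar{A_v})>0$. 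The strict inequality $p<p^*$ in the hypothesis is essential: at the threshold the tree fixed point becomes double, the contraction fails, and positivity degenerates to mere nonnegativity.
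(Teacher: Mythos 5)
The paper does not prove this lemma at all---it is imported verbatim from Shearer~\cite{Shearer1985LocalLemma}---so there is no internal proof to compare against. Your route is essentially the Scott--Sokal derivation of Shearer's bound: show that the constant vector $\bp=(p,\ldots,p)$ lies in $\cR(G)$ for every dependency graph $G$ of maximum degree at most $d$ and every $p<(d-1)^{d-1}/d^d$, then invoke Theorem~\ref{thm:LLL}. That reduction is sound, and your analysis of the $d$-regular tree is correct: the ratio recursion $R=(1-pR)^{-(d-1)}$ loses its positive fixed point exactly at $p^{*}=(d-1)^{d-1}/d^{d}$, which is where the threshold comes from.

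The genuine gap is the step you yourself flag as delicate: lifting the tree analysis to arbitrary graphs of maximum degree $d$. This is not a verification to be deferred---it is the entire content of Shearer's theorem, namely that the $d$-regular tree is extremal for the positivity region of the signed independence polynomial among all graphs of maximum degree $d$. The mechanism you propose does not work as stated: after deleting $N^{+}(v)$ from a non-tree graph $H'$, the remaining graph is in general connected and does not split into per-child components, so the ratio $U_{H'\setminus N^{+}(v)}(x)/U_{H'\setminus v}(x)$ does not take the product-of-children form on which your contraction argument rests. The claim that ``extra edges only identify variables'' and ``cannot push the ratio outside the stable interval'' is precisely the assertion requiring proof; because of the alternating signs, adding edges can move $U_{H'}$ in either direction, so no naive monotonicity is available. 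A complete argument needs the inductive ratio bound of Shearer (or of Scott and Sokal, Section~8 of~\cite{scott_sokal_2006}), which by induction on the number of vertices bounds $U_{H'}(x)/U_{H'\setminus v}(x)$ from below by the corresponding tree value simultaneously for \emph{all} induced subgraphs $H'$ and all $v\in H'$. Until that lemma is supplied, the proof is incomplete; everything downstream of it (the diagonal-path criterion for membership in $\cR(G)$ and the application of Theorem~\ref{thm:LLL}) is fine.
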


\begin{proposition}
\label{prp:Paths}
$
 \lim_{n \to \infty} \hat{\mu}(P_n) = \lim_{n \to \infty} \hat{\mu}(C_n) = 4
$ 
\end{proposition}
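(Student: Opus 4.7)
The plan is to compute $\hat{\mu}(P_n)$ exactly via the signed independence polynomial and then squeeze $\hat{\mu}(C_n)$ between a Shearer-type upper bound and a subgraph lower bound. Since paths are chordal, Corollary~\ref{cor:Trees} gives $\hat{\mu}(P_n) = 1/r_n$, where $r_n$ is the smallest positive root of $U_{P_n}(x)$. Expanding $U_{P_n}$ at a leaf yields the two-term recurrence $U_{P_n}(x) = U_{P_{n-1}}(x) - x\,U_{P_{n-2}}(x)$ with bases $U_{P_0}(x) = 1$ and $U_{P_1}(x) = 1-x$, whose characteristic polynomial is $t^2 - t + x$.

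For $x \in [0, 1/4]$ the characteristic roots $t_\pm = (1 \pm \sqrt{1-4x})/2$ are real and positive, so $U_{P_n}(x) = (t_+^{n+2} - t_-^{n+2})/(t_+ - t_-) > 0$ and therefore $r_n > 1/4$. For $x > 1/4$ the roots are complex conjugates, and I would parameterize them as $t_\pm = \sqrt{x}\,e^{\pm i\theta}$ with $\cos\theta = 1/(2\sqrt{x})$ to obtain the Chebyshev-type closed form
\[U_{P_n}(x) = x^{(n+1)/2}\,\frac{\sin((n+2)\theta)}{\sin\theta}.\]
Its zeros correspond to $\theta = k\pi/(n+2)$ for positive integer $k$, and the smallest positive root is attained at $k = 1$, giving $r_n = 1/(4\cos^2(\pi/(n+2)))$ and hence $\hat{\mu}(P_n) = 4\cos^2(\pi/(n+2)) \to 4$.

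For cycles, I would first establish $\hat{\mu}(C_n) \leq 4$ via Shearer's Lemma~\ref{lem:Shearer} applied in the spirit of Proposition~\ref{prop:Losing}: under a uniformly random hat arrangement the event $A_v$ that the bear on $v$ guesses correctly has probability $g/h$ and is independent of $\{A_w \mid w \notin N^+(v)\}$, so since every vertex of $C_n$ has degree two we may take $d = 2$, which yields the threshold $(d-1)^{d-1}/d^d = 1/4$ and forces the bears to lose whenever $g/h < 1/4$. For the matching lower bound, observe that $P_n$ is obtained from $C_n$ by deleting a single edge and is thus a spanning subgraph; any winning strategy on $P_n$ transfers verbatim to $C_n$ because the bears may ignore the extra visible neighbor, so $\hat{\mu}(P_n) \leq \hat{\mu}(C_n)$. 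Sandwiching $4\cos^2(\pi/(n+2)) \leq \hat{\mu}(C_n) \leq 4$ then yields both limits. The most delicate step is the exact analysis of the recurrence: one must verify separately that no root of $U_{P_n}$ lies in $[0, 1/4]$ and then carry out the complex parameterization to pin down the smallest root above $1/4$.
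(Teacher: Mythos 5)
Your proof is correct, but the route to the lower bound for paths is genuinely different from the paper's. The paper proves $\lim_{n\to\infty}\hat{\mu}(P_n)\ge 4$ combinatorially: it iteratively glues single edges (each a winning clique game by Theorem~\ref{thm:Clique}) onto both ends of a growing path via Lemma~\ref{lem:CliqueJoin}, tuning the ratios so that after $\lceil 1/(4\varepsilon)\rceil$ steps every vertex satisfies $g_v/h_v\le 1/4+\varepsilon$; this yields explicit hatness and guessing vectors together with an explicit strategy. You instead invoke Corollary~\ref{cor:Trees} (paths are chordal) and compute the smallest positive root of $U_{P_n}$ exactly from the recurrence $U_{P_n}(x)=U_{P_{n-1}}(x)-x\,U_{P_{n-2}}(x)$, obtaining the closed form $\hat{\mu}(P_n)=4\cos^2\bigl(\pi/(n+2)\bigr)$. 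This is essentially the algebraic route via Scott--Sokal that the paper explicitly acknowledges as an alternative, and it is strictly more informative, since it pins down $\hat{\mu}(P_n)$ exactly for every $n$ (and is consistent with the paper's later computation $1/\hat{\mu}(P_3)=(3-\sqrt{5})/2$, because $4\cos^2(\pi/5)=(3+\sqrt{5})/2$). The cycle part of your argument --- Shearer's lemma with $d=2$ for the upper bound and subgraph monotonicity for the lower bound --- coincides with the paper's. Two small points to tidy: at $x=1/4$ the characteristic roots coincide, so your quotient formula must be read as a limit (giving $(n+2)2^{-(n+1)}>0$); and you should note that $x=1/(4\cos^2\theta)$ is increasing in $\theta$ on $(0,\pi/2)$, so that $k=1$ indeed yields the smallest root.
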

\begin{proof}
 First, we prove the lower bound for paths.
 Let $\varepsilon > 0$.
 We construct a sufficiently long path $P = (V, E)$ and vectors $\bh, \bg \in \N^V$ such that a hat guessing game $(P, \bh, \bg)$ is winning and $g_v/{h_v} \leq 1/4 + \varepsilon$.
 Thus, we can conclude that for every $\delta > 0$ there is $n$ such that $\hat{\mu}(P_n) \geq 4 - \delta$, i.e., $\lim_{n \to \infty} \hat{\mu}(P_n) \geq 4$.
 The same lower bound holds for cycles as they contain paths as subgraphs.
 
 We construct the path $P$ iteratively.
 Let $P^0$ be a path consisting of one edge $e_0 = \{v_0,u_0\}$.
 We set $\bg^0_{v_0} = \bg^0_{u_0} = 1$ and $\bh^0_{v_0} = \bh^0_{u_0} = 2$.
 By Theorem~\ref{thm:Clique}, the game $(P^0, \bh^0, \bg^0)$ is winning.
 
 Now, we want to construct a game $\cH_{i+1} = (P^{i+1},\bh^{i+1},\bg^{i+1})$ from $(P^{i},\bh^i,\bg^i)$.
 Let $v_i$ and $u_i$ be the endpoints of $P^i$.
 We will maintain the invariant that $g^i_{v_i} = g^i_{u_i}$ and $h^i_{v_i} = h^i_{u_i}$ and let us denote the ratio $g^i_{v_i}/h^i_{v_i}$ by $r_i$.
 We construct the paths $P^i$ in a way such that $r_i = \frac{1}{2} - i\cdot \varepsilon$.
 Note that this equality holds for the game $(P^0, \bh^0, \bg^0)$.
 
 Let $P'$ be a path consisting of one edge $e' = \{w,w'\}$ and we set $\bg'$ and $\bh'$ in such a way that $g'_w/h'_w = 1/2 + (i+1)\cdot \varepsilon$ and $g'_{w'}/h'_{w'} = {1}/{2} - (i+1)\cdot \varepsilon$.
 Again by Theorem~\ref{thm:Clique}, the game $(P',\bh',\bg')$ is winning.
 To create the path $P^{i+1}$, we join two copies of $P'$ to $P^i$ using Lemma~\ref{lem:CliqueJoin}.
 More formally, we join one copy of $P'$ by identifying $w$ and $u_i$ and the second copy by identifying $w$ and $v_i$.
 Thus, the endpoints $u_{i+1}$ and $v_{i+1}$ of $P^{i+1}$ are copies of $w'$.
 By Lemma~\ref{lem:CliqueJoin}, we get a winning game $\cH_{i + 1} = (P^{i+1}, \bh^{i+1}, \bg^{i+1})$.
 For a sketch of construction of the game $\cH_{i+1}$, see Figure~\ref{fig:Path}.
 Note that indeed $r_{i+1} = \frac{1}{2} - (i+1)\cdot \varepsilon$.

 \begin{figure}
  \centering
  \includegraphics{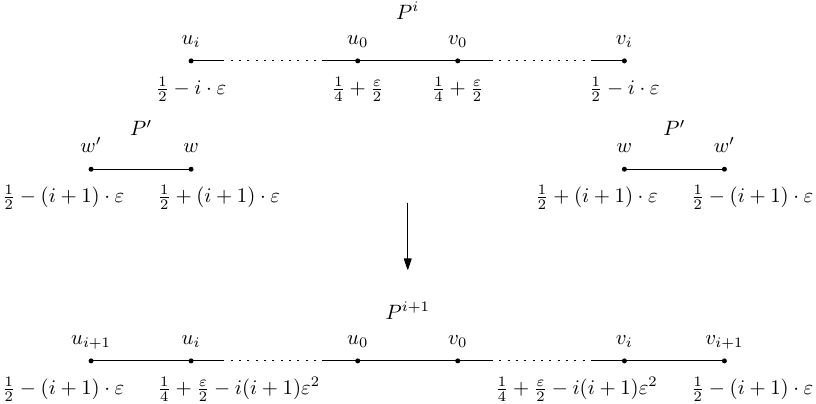}
  \caption{A sketch of construction of the game $\cH_{i+1}$. The formulas below vertices are the fractions $g_v/h_v$.}
  \label{fig:Path}
 \end{figure}
 
 We end this process after $k = \left\lceil\frac{1}{4\varepsilon}\right \rceil$ steps.
 Thus, it holds that $r_{k} = \frac{1}{2} - k\cdot \varepsilon \leq \frac{1}{4}$.
 On the other hand, it holds for each $0 \leq i < k$ by Lemma~\ref{lem:CliqueJoin} that 
 \[
  \frac{g^k_{v_i}}{h^k_{v_i}} = \frac{g^k_{u_i}}{h^k_{u_i}} = \left(\frac{1}{2} - i\cdot \varepsilon\right)\cdot \left(\frac{1}{2} + (i+1)\cdot \varepsilon\right) = \frac{1}{4} + \frac{\varepsilon}{2} - i(i + 1)\varepsilon^2.
 \]
 Thus, for each vertex $v$ of $P^k$ holds that $\frac{g^k_v}{h^k_v} \leq \frac{1}{4} + \varepsilon$ as claimed.

 Now, we prove the upper bound.
 Let $\cH = (G,h,g)$ be a game such that $G$ is a path or a cycle and $\frac{h}{g} > 4$.
 We will prove that bears lose $\cH$, which implies that $\lim_{n \to \infty} \hat{\mu}(P_n),\lim_{n \to \infty} \hat{\mu}(C_n) \leq 4$.
 Let us fix some strategy of bears and the demon gives each bear a hat of random color (chosen uniformly and independently).
 We denote $A_v$ an event that the bear on $v$ guesses correctly.
 Then, $\Pr[A_v] = \frac{g}{h} < \frac{1}{4}$.
 Since the maximum degree in $G$ is 2, each event $A_v$ might depend only on at most 2 other events.
 By Lemma~\ref{lem:Shearer}, for events $(A_v)_{v \in V(G)}$ and $d = 2$, we have that no event $A_v$ occurs with non-zero probability.
 Thus, there is a hat arrangement such that no bear guesses correctly.
\end{proof}

We remark that Proposition~\ref{prp:Paths} follows also from the results of Scott and Sokal~\cite{scott_sokal_2006} as they proved that the small positive roots of $U_{P_n}$ and $U_{C_n}$ go to $1/4$ when $n$ goes to infinity.
However, their proof is purely algebraic whereas we provide a combinatorial proof.

Further, we discuss the value of $\hat{\mu} = \hat{\mu}(P_3)$.
By Corollary~\ref{cor:Trees}, we have that $1/\hat{\mu}$ is the smallest positive root of $U_{P_3}(x) = x^2 - 3x + 1$.
Thus, $1/\hat{\mu} = (3 - \sqrt{5})/2$. 
By Theorem~\ref{thm:Chordal}, it holds that for any $p,q \in \N$ such that $\hat{\mu} \leq p/q$ there are $g,h \in \N$ such that $p/q = h/g$ and the game $(P_3,h,g)$ is winning.
However, the strategy from the proof gives us $h = p\cdot (p - q)$ and $g = q\cdot (p-q)$.
We present a sequence $\left(h_i/g_i\right)_{i \in \N}$ such that the sequence goes to $\hat{\mu}$, for each $i$ the numbers $h_i$ and $g_i$ are coprime, and the game $(P_3,h_i,g_i)$ is winning for each $i$.
Thus, we present a strategy that is in some sense more efficient than the strategy given by the proof of Theorem~\ref{thm:Chordal} as the general strategy for $P_3$ does not produce numbers $g$ and $h$ which are coprimes.

First, we present the strategy for $P_3$.
Note that if $1 \geq g/h \geq 1/\hat{\mu}$ (for $g,h \in \N$) then $U_{P_3}\left(g/h\right) = \left(g/h\right)^2 - 3g/h + 1 < 0$.
We change the inequality to $g^2 - 3gh + h^2 < 0$ and prove that for each $g$ and $h$, which satisfy the previous inequality, there is a winning strategy for $(P_3,h,g)$.

\begin{lemma}
 \label{lem:StrategyP3}
 Let $g,h \in \N$ such that $g^2 - 3gh + h^2 < 0$.
 Then, the bears win the game $(P_3,h,g)$.
\end{lemma}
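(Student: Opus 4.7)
The plan is to construct an explicit winning strategy exploiting the additive structure of $\mathbb{Z}_h$. Label the vertices of $P_3$ as $v_1, v_2, v_3$ with $v_2$ in the middle, identify the color set with $\mathbb{Z}_h$, and set $m = h - g$ so the hypothesis becomes $m^2 < gh$. I will take the endpoint strategies to be translates by the middle bear's color: $G_1(c) = c + S_1$ and $G_3(c) = c + S_3$ for fixed $g$-subsets $S_1, S_3 \subseteq \mathbb{Z}_h$, with complements $F_1, F_3$ of size $m$. The middle bear's strategy $G_2(a,b)$ will then be engineered to cover exactly the arrangements where both endpoints fail.

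Given $\varphi(v_1) = a$, $\varphi(v_2) = c$, $\varphi(v_3) = b$, the endpoints jointly fail iff $a - c \in F_1$ and $b - c \in F_3$, i.e., $c \in (a - F_1) \cap (b - F_3)$. For $v_2$ to rescue such arrangements I require $G_2(a, b) \supseteq (a - F_1) \cap (b - F_3)$. A short change of variables $y = a - c$ shows that $|(a - F_1) \cap (b - F_3)| = |F_1 \cap (F_3 + a - b)|$, which depends only on $d := a - b$. The entire problem thus reduces to a combinatorial existence question: produce $F_1, F_3 \subseteq \mathbb{Z}_h$ of size $m$ with $\max_{d \in \mathbb{Z}_h} |F_1 \cap (F_3 + d)| \leq g$. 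Once such sets are in hand, padding $(a - F_1) \cap (b - F_3)$ arbitrarily to a $g$-subset of $\mathbb{Z}_h$ defines $G_2(a, b)$, and the resulting strategy is winning by construction.

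The main obstacle is the combinatorial step. A double count gives $\sum_{d \in \mathbb{Z}_h} |F_1 \cap (F_3 + d)| = |F_1|\cdot |F_3| = m^2$, so the average intersection is $m^2/h$, which is strictly below $g$ by the hypothesis; hence the bound $\leq g$ is at least consistent with averaging. However, the tight regime is delicate: by Cassini's identity, the coprime pairs $(h, g) = (F_{2k+1}, F_{2k-1})$ saturate the hypothesis to $gh - m^2 = 1$, leaving essentially no slack. I would attack existence either probabilistically (sampling independent uniform $m$-subsets and combining hypergeometric concentration with a union bound over the $h$ values of $d$, which should work comfortably when $gh - m^2$ is sufficiently large), or by explicit arithmetic construction designed to spread the convolution $d \mapsto |F_1 \cap (F_3 + d)|$ close to its average, likely needing different choices in the tight and non-tight regimes and a small amount of case analysis at the boundary.
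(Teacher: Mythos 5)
Your reduction is correct and is in fact the same as the paper's setup, phrased in complements: the paper also has both endpoints guess translates (by the middle bear's color) of fixed $g$-sets and has the middle bear cover the colors on which both endpoints fail, and your condition $\max_d |F_1 \cap (F_3+d)| \le g$ is equivalent, via inclusion--exclusion, to the paper's bound $|I_u \cap I_w| \le 3g-h$ on the two success sets. But this reduction is the easy half of the lemma; the substance is exhibiting the two sets, and that is exactly where your proposal stops. You correctly observe that the averaging bound $m^2/h < g$ gives no control over the maximum, and you correctly observe that for the Fibonacci pairs the slack $gh - m^2$ equals $1$ --- which means your probabilistic fallback provably cannot work in precisely the regime the lemma is used for later in the paper (the sequence $h_i = F_{2i}$, $g_i = F_{2i-2}$ with $g_i^2 - 3g_ih_i + h_i^2 = -1$). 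What remains is a promissory note (``explicit arithmetic construction \dots\ likely needing different choices in the tight and non-tight regimes''), so the proof is not complete.

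For comparison, the paper closes this gap with a concrete pair: one endpoint's set is an interval $\{0,1,\dots,g-1\}$ and the other's is the rounded arithmetic progression $\{\lfloor k\cdot h/g\rceil : k = 0,\dots,g-1\}$. If the interval met the progression in more than $3g-h$ points, two progression elements whose indices differ by $3g-h$ would both lie in an interval of length $g$, forcing $(3g-h)\cdot h/g - 1 \le g-1$ and hence $g^2 - 3gh + h^2 \ge 0$, contradicting the hypothesis. A single uniform construction with a two-line rounding estimate thus handles all cases, including the tight one; no case analysis is needed. Supplying such a construction (or this one) is what your argument is missing.
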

\begin{proof}
 Let $V(P_3) = \{u,v,w\}$ where $v$ and $w$ are the endpoints of the path $P_3$.
 We identify the colors with a set $C = \{0,\dots,h-1\}$.
 Let the bear on $v$ get a hat of color $c_v$.
 The bear on $u$ makes guesses $A_u = \bigl\{c_v, c_v -1,\dots, c_v - (g - 1)\bigr\}$.
 The bear on $w$ makes guesses $A_w = \bigl\{c_v, c_v - \left\lfloor h/g \right\rceil, \dots, c_v - \left\lfloor (g - 1)\cdot h/g \right\rceil\bigr\}$, where $\lfloor x \rceil$ is the nearest integer to $x$ (i.e., standard rounding).
 We compute the guessed colors modulo $h$.

 The bear on $v$ computes two sets of colors $I_u$ and $I_w$ based on the hat colors of bears on $u$ and $w$ such that he will not guess the colors from $I_u \cup I_w$ because if $c_v \in I_u \cup I_w$ then the bear on $u$ or the bear on $w$ would guess correctly (or maybe both of them).
 The guesses of the bear on $u$ is an interval in the set $C$.
 However, the guesses of the bear on $w$ are spread through $C$ as evenly as possible.
 Thus, the intersection $I_u \cap I_w$ is small and $I_u \cup I_w$ is large.

 More formally, let $c_u$ and $c_w$ be hat colors of the bears on $u$ and $w$, respectively.
 Then, $I_u = \{c_u,c_u + 1,\dots,c_u + (g - 1)\}$, and $I_w = \{c_w, c_w + \lfloor h/g \rceil,\dots, c_w + (g - 1)\cdot \lfloor h/g \rceil\}$.
 Again, we compute the elements in the sets modulo $h$.
 Note that if $c_v \in I_u$ then the bear on $u$ guesses correctly because in that case $c_v = c_u + t \pmod{h}$ for some $t < g$ and thus $c_u \in A_u$.
 An analogous property holds for $c_w$.
 Thus, the bear on $v$ does not have to guess the colors from $I_u \cup I_w$.

 We will prove that $\bigl|C \setminus (I_u \cup I_w)\bigr| \leq g$.
 Thus, the bear on $v$ can guess all colors outside $I_u$ and $I_w$ and makes at most $g$ guesses.
 First, we prove that $|I_u \cap I_w| \leq 3g - h$.
 Suppose opposite, that $|I_u \cap I_w| > 3g - h$.
 In such a case, there must be $k$ such that both colors $c_w + \lfloor k \cdot h/g \rceil$ and  $c_w + \lfloor (k + 3g - h) \cdot h/g \rceil$ belong to $I_u$.
 This implies that $\lfloor (k + 3g - h) \cdot h/g \rceil - \lfloor k \cdot h/g \rceil \leq g-1$.
 Applying bounds on the rounded terms, we obtain
\begin{align*}
g-1 &\geq \left\lfloor (k + 3g - h) \cdot \frac{h}{g} \right\rceil - \left\lfloor k \cdot
\frac{h}{g} \right\rceil \\
    &\geq  (k + 3g - h) \cdot \frac{h}{g} - 0.5 - k \cdot \frac{h}{g} - 0.5 \\
    &= (3g-h)\cdot \frac{h}{g} - 1.
\end{align*}

The final inequality implies $g^2 - 3gh + h^2 \geq 0$ which contradicts the assumption of the lemma.
Therefore, the size of the intersection $I_u \cap I_w$ is at most $3g-h$.
It follows that the size of the union $I_u \cup I_w$ is at least $2g - (3g - h) = h - g$ and $\bigl|C \setminus (I_u \cup I_w)\bigr| \leq g$.
\end{proof}

Let $F_i$ be the $i$-th Fibonacci number\footnote{$F_0 = F_1 = 1$ and $F_{i+1} = F_{i - 1} + F_i$.}.
We define $h_i = F_{2i}$ and $g_i = F_{2i - 2}$.
Now, we prove the sequence $(g_i/h_i)_{i \in \N}$ has the sought properties.

\begin{lemma}
 For each $i \in \N$ it holds that $\frac{h_i}{g_i} \leq \hat{\mu}$. Moreover,
 \[
  \lim_{i \to \infty} \frac{h_i}{g_i} = \hat{\mu}.
 \]
\end{lemma}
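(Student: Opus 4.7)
The plan is to reduce everything to a single Fibonacci identity. By Lemma~\ref{lem:StrategyP3}, the game $(P_3, h_i, g_i)$ is winning as soon as $h_i^2 - 3 h_i g_i + g_i^2 < 0$, and this inequality is exactly equivalent to $h_i/g_i$ lying strictly between the two positive roots of $x^2 - 3x + 1$, namely $1/\hat{\mu} = (3-\sqrt{5})/2$ and $\hat{\mu} = (3+\sqrt{5})/2$. So both assertions of the lemma will follow from the stronger identity
\[
  F_{2i}^2 - 3 F_{2i} F_{2i-2} + F_{2i-2}^2 = -1 \qquad \text{for every } i \ge 1.
\]

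\textbf{Key step: the Fibonacci identity.} I would prove this directly from the recurrence. Writing $a = F_{2i-2}$ and $b = F_{2i-1}$, so that $F_{2i} = a + b$, a short expansion gives
\[
  (a+b)^2 - 3(a+b)a + a^2 = b^2 - ab - a^2 = b(b-a) - a^2 = F_{2i-1} F_{2i-3} - F_{2i-2}^2.
\]
The right-hand side is the Cassini-type expression $F_{n-1}F_{n+1} - F_n^2$ with $n = 2i-2$, which equals $(-1)^{n+1} = -1$ in the convention $F_0 = F_1 = 1$ of the paper (a one-line induction using $F_{n+1} = F_n + F_{n-1}$ verifies this, with base case $F_0 F_2 - F_1^2 = 2 - 1 = 1$). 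Combining these lines yields the claimed identity, and hence $h_i^2 - 3 h_i g_i + g_i^2 = -1 < 0$, so Lemma~\ref{lem:StrategyP3} gives a winning strategy for $(P_3, h_i, g_i)$ and therefore $h_i/g_i \le \hat{\mu}$.

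\textbf{The limit.} Dividing the identity by $g_i^2 = F_{2i-2}^2$ gives
\[
  \left(\frac{h_i}{g_i}\right)^2 - 3\,\frac{h_i}{g_i} + 1 = -\frac{1}{g_i^2},
\]
and the right-hand side tends to $0$ as $i \to \infty$. Hence any accumulation point of the sequence $h_i/g_i$ is a root of $x^2 - 3x + 1$. Since $F_{2i} > F_{2i-2}$ for all $i \ge 1$, we have $h_i/g_i > 1 > 1/\hat{\mu}$, so the sequence cannot approach the smaller root; it must therefore converge to $\hat{\mu} = (3+\sqrt{5})/2$.

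\textbf{Remark and anticipated obstacle.} The identity also gives $\gcd(h_i, g_i) = 1$ for free, since any common divisor must divide $-1$, confirming the coprimality claim that motivated this construction. I do not anticipate a real obstacle: the only non-routine ingredient is spotting that the telescoping of the quadratic form leads exactly to a Cassini expression, and once that is in hand the remaining steps are elementary.
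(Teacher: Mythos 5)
Your proof is correct, but it takes a genuinely different route from the paper's. The paper proves this lemma by quoting standard facts about Fibonacci quotients: it writes $1/\hat{\mu} = 1 - 1/\varphi$ for the golden ratio $\varphi$, uses that $F_i/F_{i-1} \to \varphi$ and $F_{2i}/F_{2i-1} \ge \varphi$, and converts via $F_{2i} - F_{2i-1} = F_{2i-2}$. You instead derive everything from the single identity $F_{2i}^2 - 3F_{2i}F_{2i-2} + F_{2i-2}^2 = -1$, reading off the inequality from the sign of the quadratic form (whose roots are $1/\hat{\mu}$ and $\hat{\mu}$) and the limit from its magnitude $1/g_i^2 \to 0$ together with the bound $1 < h_i/g_i < \hat{\mu}$, which pins every accumulation point to the larger root. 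That identity is precisely the content of the \emph{next} lemma in the paper, which the authors prove by a lengthy computation with the closed-form (Binet) expression; your telescoping reduction to Cassini's identity is shorter and also yields the subsequent coprimality observation for free, so your argument effectively merges three of the paper's statements into one elementary computation. Two cosmetic points: invoking Lemma~\ref{lem:StrategyP3} to get $h_i/g_i \le \hat{\mu}$ is legitimate but an unnecessary detour, since $h_i^2 - 3h_ig_i + g_i^2 < 0$ already places $h_i/g_i$ strictly below the larger root $\hat{\mu}$ once Corollary~\ref{cor:Trees} identifies $1/\hat{\mu}$ as the smaller root; and for $i = 1$ your Cassini expression reads $F_1F_{-1} - F_0^2$, so you should either set $F_{-1} = 0$ by extending the recurrence backwards or verify $i=1$ directly ($4 - 6 + 1 = -1$).
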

\begin{proof}
 Note that $1/\hat{\mu} = 1 - \frac{\sqrt{5} - 1}{2} = 1 - \frac{1}{\varphi}$, where $\varphi$ is the golden ratio, i.e., $\varphi = \frac{1 + \sqrt{5}}{2}$.
 It is well-known that fractions $\frac{F_i}{F_{i-1}}$ go to $\varphi$.
 Moreover, $\frac{F_{2i}}{F_{2i-1}} \geq \varphi$.
 Thus, for each $i \in \N$ it holds that
 \[
  \frac{1}{\hat{\mu}} = 1 - \frac{1}{\varphi} \leq 1 - \frac{F_{2i-1}}{F_{2i}} = \frac{F_{2i-2}}{F_{2i}} = \frac{g_i}{h_i}.
 \]
 and the fractions $\frac{h_i}{g_i}$ indeed go to $\hat{\mu}$. 
\end{proof}

\begin{observation}
    Due to Cassini's identity, for each $i \in \N$ holds that
    \begin{equation}\label{eq:Fibonacci}
        g_i^2 - 3g_i h_i + h_i^2 = (h_i-g_i)^2 - h_ig_i = F_{2i-1}^2-F_{2i}F_{2i-2}=(-1)^{2i-1} = -1
    \end{equation}
\end{observation}

\begin{observation}
 For each $i \in \N$ the numbers $h_i$ and $g_i$ are coprime.
\end{observation}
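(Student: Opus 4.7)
The plan is to observe that this is an immediate one-line consequence of the previous lemma, which established the identity $g_i^2 - 3g_ih_i + h_i^2 = -1$. Specifically, I would let $d$ be any positive common divisor of $h_i$ and $g_i$, and then note that $d$ must also divide any integer linear combination of $h_i^2$, $g_ih_i$, and $g_i^2$; in particular, $d$ divides the left-hand side $g_i^2 - 3g_ih_i + h_i^2$, and therefore $d \mid -1$, forcing $d = 1$.

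The entire argument rests on the Diophantine identity already proved in Equation~\ref{eq:Fibonacci}; there is no genuine obstacle to overcome here, since the Fibonacci-specific content (the appearance of the golden ratio, the closed-form expression, the algebraic simplification) has all been absorbed into the preceding lemma. The only thing to verify is the trivial number-theoretic principle that a common divisor of two integers divides any polynomial combination of them with integer coefficients, which is immediate from distributivity.

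If one wanted to phrase the proof without invoking the previous lemma explicitly, an alternative route would be to use the classical identity $\gcd(F_m, F_n) = F_{\gcd(m,n)}$ together with $F_2 = 1$, which would yield $\gcd(F_{2i}, F_{2i-2}) = F_{\gcd(2i, 2i-2)} = F_2 = 1$. However, since Equation~\ref{eq:Fibonacci} has just been established, the shorter divisibility argument is cleaner and fits the narrative of the paper, so I would go with that. I expect the whole proof to occupy no more than two or three lines of LaTeX.
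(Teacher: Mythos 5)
Your proof is correct, but it takes a different route from the paper. The paper argues directly from the Fibonacci recurrence: it writes $\gcd(F_{2i-2}, F_{2i}) = \gcd(F_{2i-2}, F_{2i-1} + F_{2i-2}) = \gcd(F_{2i-2}, F_{2i-1})$ and then invokes the standard inductive fact that consecutive Fibonacci numbers are coprime. You instead treat the observation as a corollary of the identity $g_i^2 - 3 g_i h_i + h_i^2 = -1$ established in the preceding lemma: any common divisor of $g_i$ and $h_i$ divides this integer combination and hence divides $-1$, forcing the gcd to be $1$. Both arguments are sound and short. Yours has the advantage of reusing machinery the paper has just built, making the observation literally a one-line consequence; the paper's version is self-contained and does not depend on the correctness of the closed-form computation in the previous lemma, which is a mild robustness benefit. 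Your fallback via $\gcd(F_m, F_n) = F_{\gcd(m,n)}$ is also valid but, as you note, unnecessary here.
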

\begin{proof}
 By definition, $g_i = F_{2i-2}$ and $h_i = F_{2i}$.
 \[
  \text{GCD}(F_{2i-2}, F_{2i}) = \text{GCD}(F_{2i-2}, F_{2i-1} + F_{2i-2}) = \text{GCD}(F_{2i-2}, F_{2i-1})
 \]
 It is easy to prove by induction that for each $i \in \N$ it holds that $\text{GCD}(F_{i-1}, F_{i}) = 1$.
\end{proof}

\acknowledgements{We would like to thanks to Milo\v{s} Chrom\'{y}, Micha\l{} D\k{e}bski, Sophie Rehberg, and Pavel Valtr for fruitful discussions at early stage of this project during workshop KAMAK 2019.}

\bibliography{main}

\appendix

\section{The Second Proof of the Non-algorithmic Part of Theorem~\ref{thm:Clique}}
\label{sec:Clique2}

\begin{theorem}[Non-algorithmic part of Theorem~\ref{thm:Clique}]
 Bears win a game $(K_n, \bh, \bg)$ if and only if 
 \[
  \sum_{v \in V(K_n)} \frac{g_v}{h_v} \geq 1.
 \]
\end{theorem}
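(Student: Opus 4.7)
My plan has two parts, matching the two directions of the biconditional.

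For the necessity direction, a standard counting argument suffices. Fix any strategy $(\Gamma_v)$ for the bears and consider a uniformly random hat arrangement. Conditional on the hats of all bears except $v$, the bear on $v$ has already committed to his $g_v$ guesses, and these correctly predict exactly $g_v$ out of $h_v$ equiprobable values of his own hat color. Hence the probability that bear $v$ is correct equals $g_v/h_v$, and by linearity of expectation the expected number of correctly-guessing bears under a uniform random arrangement is $\sum_v g_v/h_v$. If this sum is strictly less than $1$, then some arrangement must leave every bear incorrect and the bears lose.

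For the sufficiency direction, the plan is to give an inductive construction that is distinct from the explicit modular construction of the first proof, roughly following the style of Kokhas and Latyshev~\cite{Kokhas2021CliquesI}. The base case $n=1$ is immediate. For the inductive step, pick a bear $v^\star$ and a partition of $[h_{v^\star}]$ into a set $A$ of $g_{v^\star}$ ``good'' colors together with $h_{v^\star} - g_{v^\star}$ ``bad'' colors. Bear $v^\star$ is instructed to always guess the colors in $A$ regardless of what he sees, which handles every arrangement whose $v^\star$-coordinate lies in $A$. Since every other bear on the clique sees the color of $v^\star$, the remaining bears can branch their behaviour on this color, and the task reduces to supplying, for each bad color, a winning strategy on $K_{n-1}$ with parameters derived from the original $\bh, \bg$, obtained from the induction hypothesis.

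The main obstacle will be parameter bookkeeping. The naive split gives $\sum_{v \neq v^\star} g_v/h_v \geq 1 - g_{v^\star}/h_{v^\star}$ on the sub-clique, which is in general strictly less than $1$, so the induction does not close by itself. To close this gap, the plan is either to merge two bears $v^\star$ and $w^\star$ into a single composite bear whose effective ratio comes from the inclusion-exclusion identity $1 - (1 - g_{v^\star}/h_{v^\star})(1 - g_{w^\star}/h_{w^\star})$, and then to argue that a composite strategy can be split between the two originals (which is feasible precisely because they see each other on the clique), or to rescale the remaining bears' parameters, passing to common multiples to preserve integrality, so that the summed ratios on $K_{n-1}$ reach $1$ again. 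Verifying that such a reduction preserves $\sum_v g_v/h_v \geq 1$ and yields integral $\bh', \bg'$ is the delicate step, and is where the clique structure enters the argument in a combinatorial way genuinely different from the modular-arithmetic construction used in the first proof.
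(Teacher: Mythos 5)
Your necessity direction is correct and is the same counting argument the paper uses. The sufficiency direction, however, is where all of the content of the theorem lies, and your proposal does not deliver it. You correctly observe that the naive induction (bear $v^\star$ always guesses a fixed set $A$, the others branch on his color) leaves each sub-game on $K_{n-1}$ needing $\sum_{v\neq v^\star} g_v/h_v \ge 1$ rather than the available $\ge 1 - g_{v^\star}/h_{v^\star}$, but neither proposed repair is carried out, and the first rests on a wrong formula. Two bears on a clique see each other, so the achievable guessing ratio of a ``composite bear'' on the color set $[h_{v^\star}]\times[h_{w^\star}]$ is \emph{additive}, $g_{v^\star}/h_{v^\star}+g_{w^\star}/h_{w^\star}$ (their correct-guessing events are made disjoint, as in a perfect strategy), not the inclusion--exclusion value $1-(1-g_{v^\star}/h_{v^\star})(1-g_{w^\star}/h_{w^\star})$, which is the formula for \emph{independent} events, i.e., for non-adjacent bears. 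Additivity is exactly what makes the linear threshold $\sum_v g_v/h_v\ge 1$ correct; with your sub-additive formula the merged bear in the tight instance $(K_2,\mathbf{2},\mathbf{1})$ would have ratio $3/4<1$ and the induction could never certify a win. Even with the correct additive formula, the merge requires splitting an arbitrary composite guess set of size $g_{v^\star}h_{w^\star}+g_{w^\star}h_{v^\star}$ into a part with at most $g_{v^\star}$ elements per column (realizable by $v^\star$, who sees $w^\star$) and a part with at most $g_{w^\star}$ elements per row; this fails for unstructured sets, so the guess sets must be kept structured (e.g., intervals over a common denominator), at which point you have rediscovered the paper's explicit construction rather than avoided it. The second repair (rescaling the remaining bears' parameters upward) assumes a winning strategy for strictly better parameters than the given ones, which is what you are trying to prove exists.

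For comparison, the paper closes this gap in two genuinely different ways: a direct construction in which $s=\sum_i c_i d_i \bmod \ell$ (with $\ell=\text{LCM}(h_1,\dots,h_n)$ and $d_i=\ell/h_i$) is covered by disjoint intervals $Q_i$ of length $d_i g_i$ and bear $i$ guesses under the hypothesis $s\in Q_i$; and, in the appendix, a non-constructive argument applying Hall's theorem to an auxiliary bipartite graph whose left side is the set of all hat arrangements. Either route would complete your sufficiency direction; the induction as you have sketched it does not.
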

\begin{proof}[(The second proof of non-algorithmic part of Theorem~\ref{thm:Clique})]
 The proof again follows the proof of Kokhas et al.~\cite{Kokhas2021CliquesI} for the single-guessing game.
 We prove only the ``if'' part.
 Thus, suppose that $\sum_{v \in V(K_n)} \frac{g_v}{h_v} \geq 1$.
 Let $V(K_n) = \{v_1,\dots,v_n\}$.
 We create an auxiliary bipartite graph $G = (V_\ell \dot\cup V_r, E)$.
 In the left partite $V_\ell$ there is a vertex for each possible coloring of hats.
 Thus we can identify each vertex in $V_\ell$ with an $n$-tuple $(c_1,\dots,c_n)$ where $c_i \in [h_{v_i}]$ is some color of the $i$-th bear's hat.
 The set $V_r$ is split into $n$ sets, $V_r = V^1_r \dot\cup \dots \dot\cup V^n_r$.
 For each $v_i \in V(K_n)$ and a tuple $(c_1,\dots,c_{i-1},*,c_{i+1},\dots,c_n)$ we have a $g_{v_i}$ vertices in the set $V^i_r$.
 Thus, the vertices in $V^i_r$ represent what could see the $i$-th bears.
 Each vertex in $V^i_r$ labeled with $(c_1,\dots,c_{i-1},*,c_{i+1},\dots,c_n)$ is connected with vertices in $V_\ell$ labeled with $(c_1,\dots,c_{i-1},c_i,c_{i+1},\dots,c_n)$ for all $c_i \in [h_{v_i}]$.
 Thus, each vertex in $V^i_r$ has degree $h(v_i)$ and each vertex in $V_\ell$ has a degree $\prod_{v \in V(K_n)} g(v)$.
 %\todo[inline]{Figure for $G$ (prepare sketch).}
 
 Note that bears win the game if and only if there is a matching in $G$ which covers $V_\ell$.
 Suppose there is such matching $M$.
 Let a bear sitting on a vertex $v_i$ sees colors $c_1,\dots,c_{i - 1},c_{i + 1},\dots,c_n$ and $U \subseteq V_r$ be a set of vertices in $V_r$ labeled by $(c_1,\dots,c_{i - 1},*,c_{i + 1},\dots,c_n)$.
 By construction of $G$, it holds that $|U| = g_{v_i}$.
 Let $N(U)$ be a set of neigbors of $U$ given by the matching $M$, thus, $|N(U)| \leq g(v_i)$.
 Each vertex $u \in N(U)$ has label $(c_1,\dots,c_{i - 1},c^u_i,c_{i + 1},\dots,c_n)$.
 Thus, the bear sitting on $v_i$ guesses colors $c^u_i$ for all $u \in N(U)$.
 It is clear that for each $v \in V(K_n)$, the bear sitting on $v$ guesses at most $g_v$ colors.
 Moreover, since the matching $M$ covers $V_\ell$ at least one bear guesses the color of his hat correctly.
 On the other hand, each winning strategy gives us a matching covering $V_\ell$.
 
 We use Hall's theorem~\cite[Chapter 2]{DiestelBook} to prove there is a matching $M$ covering $V_\ell$ if and only if $\sum_{v \in V(K_n)} \frac{g_v}{h_v} \geq 1$. 
 Let $S \subseteq V_\ell$ be a set of $m$ left vertices.
 Each vertex in $V^i_r$ has at most $h_{v_i}$ neigbors in $S$.
 Since each vertex in $V_\ell$ has $g_{v_i}$ neigbors in $V^i_r$, the set $S$ has at least $g_{v_i} \cdot \frac{m}{h_{v_i}}$ vertices in $V^i_r$.
 Therefore, in total the set $S$ has at least
 \[
  \sum_{v \in V(K_n)} g_v\cdot \frac{m}{h_v} \geq m
 \]
 neighbors in $V_r$.
 We conclude by Hall's theorem, that there is a matching in $G$ which covers $V_\ell$.
\end{proof}

Albeit Hall's theorem is constructive, the size of the auxiliary graph $G$ constructed in the proof could be exponential in $n$.
Thus, this proof can not be used for designing a polynomial algorithm.

\end{document}